\def\l@paragraph{\@tocline{3}{0pt}{1pc}{9pc}{}}
\newtheorem{theorem}{Theorem}[subsection]
\newtheorem{lemma}[theorem]{Lemma}
\newtheorem{proposition}[theorem]{Proposition}
\newtheorem{corollary}[theorem]{Corollary}
\theoremstyle{definition}
\newtheorem{definition}[theorem]{Definition}
\newtheorem{remark}[theorem]{Remark}
\newtheorem{example}[theorem]{Example}
\newcommand\M[1]{\mathscr{#1}}
\newcommand\K{\mathbb{K}}
\newcommand\red[1]{\text{Red}\left(#1\right)}
\newcommand\nf[1]{\text{NF}\left(#1\right)}
\newcommand\im[1]{\text{im}\left(#1\right)}
\newcommand\redd[1]{\emph{Red}\left(#1\right)}
\newcommand\imm[1]{\emph{im}\left(#1\right)}
\newcommand\obsred[1]{\text{Obs}^{#1}}
\newcommand\obsredd[1]{\emph{Obs}^{#1}}
\newcommand\F{\longrightarrow}
\newcommand\EV[1]{\left\langle #1\right\rangle}
\newcommand\id[1]{\text{Id}_{#1}}
\newcommand\tens[1]{\text{T}\left(#1\right)}
\newcommand\supp[1]{\text{supp}\left(#1\right)}
\newcommand{\G}{Gr\"obner}
\newcommand{\RO}{\textbf{RO}\left(G,\ <\right)}
\newcommand\lt[1]{\text{lt}\left(#1\right)}
\newcommand\ltt[1]{\emph{lt}\left(#1\right)}
\newcommand\lc[1]{\text{lc}\left(#1\right)}
\newcommand\Fl[1]{\underset{#1}{\longrightarrow}}
\newcommand\noy[1]{\text{ker}^{-1}\left(#1\right)}
\newcommand\syz[1]{\textbf{syz}\left(#1\right)}
\titleformat{\subsubsection}[runin]
{\normalfont\bfseries}
{\thesubsubsection.}{.5em}{}[.]
\begin{document}

\title{Syzygies among reduction operators}
\author{Cyrille Chenavier~\footnote{\noindent Universit\'e Paris Diderot, Laboratoire IRIF, INRIA, \'equipe $\pi r^2$, chenavier@pps.univ-paris-diderot.fr.}}
\date{}
\maketitle

\begin{abstract}

We introduce the notion of syzygy for a set of reduction operators and relate it to the notion of syzygy for presentations of algebras. We give a method for constructing a linear basis of the space of syzygies for a set of reduction operators. We interpret these syzygies in terms of the confluence property from rewriting theory. This enables us to optimise the completion procedure for reduction operators based on a criterion for detecting useless reductions. We illustrate this criterion with an example of construction of commutative \G\ basis.

\end{abstract}

\paragraph{Keywords:} reduction operators, syzygies, completion procedures, commutative \G\ bases.

\tableofcontents

\section{Introduction}

Description and computation of syzygies for presentations of algebraic structures has been investigated by methods from homological algebra, Koszul duality and \G\ bases theory. In homological algebra, the constructive methods using syzygies are initiated in the works of Koszul~\cite{MR0036511} and Tate~\cite{MR0086072} who describe free resolutions by mean of higher-order syzygies. Koszul duality, introduced by Priddy~\cite{MR0265437} and extended by Berger~\cite{MR1832913}, is inspired by these works: for homogeneous associative algebras, a candidate for the space of syzygies, that is for constructing a minimal resolution, is the Koszul dual.

For commutative algebras, methods for computing syzygies are based on \G\ bases: the module of syzygies for a \G\ basis is spanned by $S$-polynomials of critical pairs~\cite{schreyer1980berechnung}, that is the overlapping of two reductions, also called rewriting rules, on a term. Conversely, a critical pair whose $S$-polynomial reduces into zero leads to a syzygy. This correspondence between syzygies and critical pairs has applications in two directions: improvements of Buchberger's completion algorithm are based on the computation of syzygies~\cite{MR988418, moller1992grobner} and construction of free resolutions of commutative algebras are based on the computation of a \G\ basis~\cite{MR839576}. The construction of free resolutions using rewriting theory for computing syzygies also appear for other algebraic structures, such as associative algebras~\cite{MR846601, MR3334140} or monoids~\cite{MR2964639, MR3203370, MR1072284}. 

In this paper, we give a method based on the lattice of reduction operators for computing syzygies for rewriting systems whose underlying set of terms is a vector space. Description of rewriting systems by mean of reduction operators was initiated in the works of Bergman~\cite{MR506890} for noncommutative \G\ bases and exploited by Berger for studying homological properties of quadratic algebras~\cite{MR1608711, MR1683270, MR1733429}. Using reduction operators enables us to deduce a lattice criterion for detecting useless reductions during the completion procedure. As pointed out by Lazard~\cite{MR774807}, the completion procedure is interpreted as Gaussian elimination, which leads to use linear algebra techniques for studying completion. In particular, the $F_4$ and $F_5$ algorithms~\cite{MR1700538, MR2035234} are based on such techniques and adaptations of Buchberger, $F_4$ or $F_5$ algorithms to various algebraic contexts were introduced, such as associative algebras~\cite{MR1299371, xiu2012non}, invariant rings~\cite{MR2742703}, tropical \G\ bases~\cite{vaccon2017tropical} or operads~\cite{MR2667136}, for instance.

We consider a vector space $V$ equipped with a well-ordered basis $\left(G,\ <\right)$. For instance, if $V$ is a polynomial algebra (respectively a tensor algebra, an invariant ring or an operad), $G$ is a set of monomials (respectively words, orbit sums of monomials or trees) and $<$ is an admissible order on $G$. In our examples, we consider the case where $V$ is finite-dimensional and $\left(G,\ <\right)$ is a totally ordered basis of $V$.

\paragraph{Reduction operators.}

In this work, we describe linear rewriting systems by \emph{reduction operators}. A reduction operator relative to $\left(G,\ <\right)$ is an idempotent linear endomorphism $T$ of $V$ such that for every $g\ \notin\ \im{T}$, $T(g)$ is a linear combination of elements of $G$ strictly smaller than $g$. We denote by $\RO$ the set of reduction operators relative to $\left(G,\ <\right)$. 

Recall from~\cite[Proposition 2.1.14]{chenavier:hal-01325907} that the kernel map induces a bijection between $\RO$ and subspaces of $V$. Hence, $\RO$ admits a lattice structure, where the order $\preceq$, the lower-bound $\wedge$ and the upper-bound $\vee$ are defined by
\begin{itemize}
\item $T_1\preceq T_2\ $ if $\ \ker\left(T_2\right)\subseteq\ker\left(T_1\right)$,
\item $T_1\wedge T_2=\ker^{-1}\left(\ker\left(T_1\right)+\ker\left(T_2\right)\right)$,
\item $T_1\vee T_2=\ker^{-1}\left(\ker\left(T_1\right)\cap\ker\left(T_2\right)\right)$.
\end{itemize}
Given a subset $F$ of $\RO$, we denote by $\wedge F$ the lower-bound of $F$, that is the reduction operator whose kernel is the sums of kernels of elements of $F$. We have the following lattice formulation of confluence: a subset $F$ of $\RO$ is said to be \emph{confluent} if the image of $\wedge F$ is equal to the intersection of images of elements of $F$. Recall from~\cite[Corollary 2.3.9]{chenavier:hal-01325907} that $F$ is confluent if and only if the rewrite relation on $V$ defined by $v\ \F\ T(v)$, for every $T\in F$ and every $v\notin\im{T}$, is confluent. 

\paragraph{Upper-bound of reduction operators and syzygies.}

In~\ref{Definition of syzygies}, we define the \emph{syzygies} for a finite set $F\ =\ \left\{T_1,\ \cdots,\ T_n\right\}$ of reduction operators as being the elements of the kernel of the application $\pi_{F}\ \colon\ \ker\left(T_1\right)\ \times\ \cdots\ \times\ \ker\left(T_n\right)\ \longrightarrow\ \ker\left(\wedge F\right)$, mapping $\left(v_1,\ \cdots,\ v_n\right)$ to $v_1\ +\ \cdots\ +\ v_n$. The set of syzygies for $F$ is denoted by $\syz{F}$. In~\ref{Construction of G bases}, we interpret syzygies for presentations of algebras in terms of syzygies for a set of reduction operators.

In Lemma~\ref{Lemma for correctness}, we show that for every integer $2\ \leq\ i\ \leq\ n$, $\syz{T_1\wedge\cdots\wedge T_{i-1},\ T_i}$ is isomorphic to a supplement of $\syz{T_1,\ \cdots,\ T_{i-1}}$ in $\syz{T_1,\ \cdots,\ T_i}$. In Proposition~\ref{Proposition for correctness}, we give an explicit description of this supplement using the operator $\left(T_i\wedge\cdots\wedge T_{i-1}\right)\vee T_i$. Using these two intermediate results, we obtain a procedure for constructing a basis of $\syz{F}$: we construct inductively bases of $\syz{T_1,\ \cdots,\ T_i}$ using the supplement of $\syz{T_1,\ \cdots,\ T_{i-1}}$ defined from $\left(T_1\wedge\cdots\wedge T_{i-1}\right)\vee T_i$. The correctness of this procedure is proven in Theorem~\ref{Correctness of the algorithm}.

\paragraph{Application to completion.}

A \emph{completion} of a set $F\ =\ \left\{T_1,\ \cdots,\ T_n\right\}$ of reduction operators is a confluent set $F'$ containing $F$. In Section~\ref{Useless Reductions during Completion Procedures}, we present a procedure for completing $F$ taking into account useless reductions, that is the reductions which do not change the final result of a completion procedure. This notion is formally defined in Definition~\ref{Formal definition of useless reductions}. 

We first remark that the vector space $\ker\left(T_1\right)\ \times\ \cdots\ \times\ \ker\left(T_n\right)$ admits as a basis the set of all $e_{i,g}\ =\ \big(0,\ \cdots,\ 0,\ g\ -\ T_i(g),\ 0,\ \cdots,\ 0\big)$, where $1\ \leq\ i\ \leq\ n$, $g\ \notin\ \im{T_i}$ and $g\ -\ T_i(g)$ is at position $i$. Using a well-order $\sqsubset$ on this basis, we consider the set $\tilde{F}\ =\ \left\{\tilde{T_1},\ \cdots,\ \tilde{T_n}\right\}$ of reduction operators obtaining from $F$ removing the reductions
\begin{equation}\label{useless reductions}
g\ \Fl{F}\ T_i(g),
\end{equation}
where $e_{i,g}$ is the leading term of an element of $\syz{F}$ for the order $\sqsubset$. Formally, the operators $\tilde{T_i}$ are defined in the following way:
\[\tilde{T}_i(g)\ =\ 
\left\{
\begin{split}
& g,\ \ \text{if}\ \ e_{i,g}\ \text{is a leading term of an element of}\ \syz{F} \\
& T_i(g),\ \ \text{otherwise}.
\end{split}
\right.
\]
We call the set $\tilde{F}$, the \emph{reduction} of $F$. In~\ref{Incremental completion procedure}, we construct inductively a set $C\ =\ \left\{C_2,\ \cdots,\ C_n\right\}$ of reduction operators which leads to a completion of $\tilde{F}$. We call the set $C$ the \emph{incremental completion} of $\tilde{F}$. In Theorem~\ref{F4/F5}, we show that the reductions (\ref{useless reductions}) are useless in the sense that $C$ completes $F$:

\begin{quote}

\textbf{Theorem~\ref{F4/F5}.} \emph{Let F be a set of reduction operators, let $\tilde{F}$ be the reduction of $F$ and let C be the incremental completion of $\tilde{F}$. Then, $F\cup C$ is a completion of F.}

\end{quote}

Moreover, a consequence of our method for constructing the basis of $\syz{F}$ is that its leading terms are the elements $e_{i,g}$ such that $g$ does not belong to the image of $\left(T_1\wedge\cdots\wedge T_{i-1}\right)\vee T_i$. Hence, we obtain the following lattice criterion: the reductions $g\ \Fl{F}\ T_i(g)$, where $g\ \notin\ \im{\left(T_1\wedge\cdots\wedge T_{i-1}\right)\vee T_i}$, are useless reductions.

\paragraph{Useless reductions and construction of commutative \G\ bases.}

In Section~\ref{Construction of G bases}, we relate the confluence property and the completion procedure for reduction operators to the construction of commutative \G\ bases. We consider a set $X$ of variables as well as an ideal $I$ of $\K{[X]}$ spanned by a set of polynomials $R\ =\ \left\{f_1,\ \cdots,\ f_n\right\}$. Given an admissible order on the set of monomials, we consider the reduction operator $T_i$ whose kernel is the ideal spanned by $f_i$. In Proposition~\ref{RO and GB}, we show that $R$ is a \G\ basis of $I$ if and only if the set $F_R\ =\ \left\{T_1,\ \cdots,\ T_n\right\}$ of reduction operators associated to $R$ is confluent. This characterisation of \G\ bases enables us to interpret the completion of a set of reduction operators as a procedure for constructing commutative \G\ bases. Hence, the criterion of Section~\ref{Completion using syzygies} enables us to detect useless reductions during the construction of commutative \G\ bases. In Example~\ref{Example for RO}, we illustrate with an example how to use this criterion.

\paragraph{Organisation.}

In Section~\ref{Reduction operators} we recall the definition and the lattice structure of reduction operators. We interpret the upper-bound of two reduction operators in terms of syzygies. In Section~\ref{Syzygies}, we construct a basis of syzygies using the lattice structure of reduction operators. In particular, we characterise leading terms of syzygies using the lattice structure. In Section~\ref{Illustration}, we illustrate how our basis is constructed. In Section~\ref{Rewriting theory and completion}, we recall how works the completion in terms of reduction operators. In Section~\ref{Completion using syzygies}, we exploit the relationship between syzygies and useless reductions as well as our construction of a basis of syzygies to provide a lattice criterion for rejecting useless reductions during a completion procedure. In Section~\ref{Construction of G bases}, we show how to use this criterion during the construction of commutative \G\ bases.\\\\
\textbf{Acknowledgement.} This work was supported by the Sorbonne-Paris-Cit\'e IDEX grant Focal and the ANR grant ANR-13-BS02-0005-02 CATHRE. 

\section{Computation of syzygies}\label{Computation of syzygies}

In this section, we define syzygies for a set of reduction operators and we compute these syzygies using the lattice structure of reduction operators.

\subsection{Syzygies for a set of reduction operators}\label{Reduction operators}

\paragraph{Conventions and notations.}

We fix a commutative field $\K{}$ as well as a well-ordered set $\left(G,\ <\right)$. We denote by $\K{G}$ the vector space spanned by $G$.

For every $v\ \in\ \K{G}\setminus\{0\}$, we denote by $\supp{v}$ the \emph{support} of $v$, that is the set of elements of $G$ which belongs to the decomposition of $v$. The greatest element of $\supp{v}$ is denoted by $\lt{v}$ and the coefficient of $\lt{v}$ in $v$ is denoted by $\lc{v}$. The notations $\lt{v}$ and $\lc{v}$ are the abbreviations of \emph{leading term} and \emph{leading coefficient} of $v$, respectively. Given a subset $E$ of $\K{G}$, we denote by $\lt{E}$ the set of leading terms of elements of $E$: $\lt{E}\ =\ \Big\{\lt{v}\ \mid\ v\ \in\ E\Big\}$. We extend the order $<$ on $G$ into a partial order on $\K{G}$ in the following way: we have $u<v$ if $u\ =\ 0$ and $v\ \neq\ 0$ or if $\text{lt}(u)\ <\ \text{lt}(v)$.

Let $V$ be a subspace of $\K{G}$. A \emph{reduced basis} of $V$ is a basis $\M{B}$ of $V$ such that the following two conditions are fulfilled:
\begin{enumerate}
\item[\textbf{i.}] for every $e\in\M{B}$, $\lc{e}$ is equal to 1,
\item[\textbf{ii.}] given two different elements $e$ and $e'$ of $\M{B}$, $\lt{e'}$ does not belong to the support of $e$.
\end{enumerate}
Recall from~\cite[Theorem 2.1.13]{chenavier:hal-01325907} that $V$ admits a unique reduced basis. 

\begin{definition}

A \emph{reduction operator relative to} $\left(G,\ <\right)$ is an idempotent endomorphism $T$ of $\K{G}$ such that for every $g\ \in\ G$, we have $T(g)\ \leq\ g$. We denote by $\textbf{RO}\left(G,\ <\right)$ the set of reduction operators relative to $\left(G,\ <\right)$. Given $T\ \in\ \RO$, a term $g$ is said to be a \emph{T-normal form} or \emph{T-reducible} according to $T(g)\ =\ g$ or $T(g)\ \neq\ g$, respectively. We denote by $\nf{T}$ the set of $T$-normal forms and by $\red{T}$ the set of $T$-reducible terms.

\end{definition}

\paragraph{Kernels of reduction operators.}

Let $T\ \in\ \RO$. The kernel of $T$ admits as a basis the set of elements $g\ -\ T(g)$, where $g$ belongs to $\red{T}$. Hence, every $v\ \in\ \ker\left(T\right)$ admits a unique decomposition
\begin{equation}\label{T-decompositions}
v\ =\ \sum\ \lambda_g\Big(g-T(g)\Big),
\end{equation}
The decomposition (\ref{T-decompositions}) is called the \emph{T-decomposition} of $v$.

Let $\M{L}\left(\K{G}\right)$ be the set of subspaces of $\K{G}$. Recall from~\cite[Proposition 2.1.14]{chenavier:hal-01325907} that the kernel map induces a bijection between $\RO$ and $\M{L}\left(\K{G}\right)$. The inverse map is denoted by $\ker^{-1}$. Explicitly, for every $V\ \in\ \M{L}\left(\K{G}\right)$, let $\M{B}$ be the unique reduced basis of $V$. Then, $T\ =\ \noy{V}$ is defined on the basis $G$ by:
\[T(g)\ =\ \left\{
\begin{split}
& g-e_g,\ \text{if}\ g\ \in\ \lt{\M{B}}\\
&g,\ \text{otherwise},
\end{split}\right.\]
where $e_g$ is the unique element of $\M{B}$ with leading term $g$.

In Section~\ref{Syzygies}, we need the following lemma:

\begin{lemma}\label{Lemma on canonical decompositions}

Let V be a subspace of $\K{G}$. We have an isomorphism:
\[\K{G}/V\ \simeq\ \K{\Big\{g\ \in\ G\ \mid\ g\ \notin\ \ltt{V}\Big\}}.\]

\end{lemma}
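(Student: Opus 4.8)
The plan is to exhibit an explicit linear map $\K{G} \to \K{\{g \in G \mid g \notin \lt{V}\}}$ that vanishes exactly on $V$, and then apply the first isomorphism theorem. The natural candidate is the composite of the quotient map $\K{G} \to \K{G}/V$ with... but more concretely, I would use the reduction operator $T = \noy{V}$ supplied by the bijection recalled just before the lemma. By definition $T$ is idempotent with $\ker(T) = V$, so $\K{G}/V = \K{G}/\ker(T) \simeq \im{T}$. Thus it suffices to identify $\im{T}$ with $\K{\{g \in G \mid g \notin \lt{V}\}}$.

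First I would recall that, by the explicit formula for $\noy{V}$ in terms of the unique reduced basis $\M{B}$ of $V$, a term $g \in G$ satisfies $T(g) = g$ precisely when $g \notin \lt{\M{B}}$, and $\lt{\M{B}} = \lt{V}$: indeed $\lt{\M{B}} \subseteq \lt{V}$ trivially, and conversely any $v \in V \setminus \{0\}$ has $\lt{v} \in \lt{\M{B}}$ because the reduced basis can be used to reduce the leading term of $v$, which would otherwise produce a smaller nonzero element of $V$ with leading term still outside $\lt{\M{B}}$, contradicting well-foundedness of $<$ — so $\nf{T} = \{g \in G \mid g \notin \lt{V}\}$. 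Then I would show $\im{T} = \K{\nf{T}}$: the inclusion $\supseteq$ is clear since $T(g) = g$ for $g \in \nf{T}$ and $T$ is linear; for $\subseteq$, note that for any $g \in G$, $T(g) = g - e_g$ when $g \in \lt{\M{B}}$, and $e_g$ has leading term $g$ with all other support terms lying outside $\lt{\M{B}}$ by condition \textbf{ii.} of a reduced basis, hence $T(g) \in \K{\nf{T}}$; by linearity $\im{T} \subseteq \K{\nf{T}}$.

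Combining, $\K{G}/V \simeq \im{T} = \K{\{g \in G \mid g \notin \lt{V}\}}$, which is the claim. The main obstacle, such as it is, is the equality $\lt{\M{B}} = \lt{V}$, i.e. that every leading term arising from an element of $V$ already occurs as the leading term of a reduced basis vector; this is where the well-ordering of $G$ is essential and is the only place a genuine argument (a minimal-counterexample / descent argument) is needed, the rest being bookkeeping with the explicit description of $\noy{V}$. Alternatively, and perhaps more cleanly, one can avoid $T$ altogether: observe that the family $\{g + V \mid g \in G,\ g \notin \lt{V}\}$ spans $\K{G}/V$ (reduce any $v$ modulo the reduced basis, strictly decreasing $\lt{v}$ at each step, terminating by well-foundedness) and is linearly independent (a nontrivial relation would give a nonzero element of $V$ whose leading term lies outside $\lt{V}$, absurd), so it is a basis and the assignment $g + V \mapsto g$ is the desired isomorphism.
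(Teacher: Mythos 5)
Correct, and essentially the paper's own argument: the paper's proof is exactly to set $T=\noy{V}$ and compute $\K{G}/V=\K{G}/\ker(T)\simeq\im{T}=\K{\nf{T}}=\K{\big\{g\in G\mid g\notin\lt{V}\big\}}$, where the identifications $\im{T}=\K{\nf{T}}$ and $\nf{T}=\{g\in G\mid g\notin\lt{V}\}$ are taken from the cited properties of reduction operators and reduced bases, which you instead verify by hand (including $\lt{\M{B}}=\lt{V}$). Your closing alternative via the basis $\{g+V\mid g\notin\lt{V}\}$ is a valid shortcut, but the route you actually take is the same as the paper's.
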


\begin{proof}
Let $T\ =\ \noy{V}$. The operator $T$ being a linear map, we have an isomorphism between $\K{G}/V\ =\ \K{G}/\ker(T)$. Moreover, it is also a projector, so that we have $\im{T}\ =\ \K{\nf{T}}$. The latter is equal to $\K{\Big\{g\ \in\ G\ \mid\ g\ \notin\ \lt{V}\Big\}}$, which proves Lemma~\ref{Lemma on canonical decompositions}.
\end{proof}

\paragraph{Lattice structure.}\label{Lattice structure}

We deduce from the bijection induced by the kernel map that $\RO$ admits a lattice structure, where the order $\preceq$, the lower-bound $\wedge$ and the upper-bound $\vee$ are defined by
\begin{itemize}
\item[\textbf{i.}] $T_1\ \preceq\ T_2$ if $\ker\left(T_2\right)\ \subseteq\ \ker\left(T_1\right)$,
\item[\textbf{ii.}] $T_1\wedge T_2\ =\ \noy{\ker\left(T_1\right) + \ker\left(T_2\right)}$,
\item[\textbf{iii.}] $T_1\vee T_2\ =\ \noy{\ker\left(T_1\right)\cap\ker\left(T_2\right)}$.
\end{itemize}
Given a subset $F$ of $\RO$, the lower-bound of $F$ is written $\wedge F$:
\[\wedge F\ =\ \noy{\sum_{T\ \in\ F}\ker\left(T\right)}.\]
Moreover, recall from~\cite[Lemma 2.1.18]{chenavier:hal-01325907} that $T_1\ \preceq\ T_2$ implies that $\nf{T_1}$ is included in $\nf{T_2}$. Passing to the complement, we obtain
\begin{equation}\label{Inclusion of images}
T_1\ \preceq\ T_2\ \ \text{implies}\ \ \red{T_2}\ \subseteq\ \red{T_1}.
\end{equation}

\paragraph{Notations.}

Let $F\ =\ \left\{T_1,\ \cdots,\ T_n\right\}$ be a finite subset of $\RO$. The vector space $\ker\left(T_1\right)\ \times\ \cdots\ \times\ \ker\left(T_n\right)$ is denoted $\textbf{ker}(F)$. We consider the linear map $\pi_{F}\ \colon\ \textbf{ker}\left(F\right)\ \longrightarrow\ \ker\left(\wedge F\right)$ defined by
\[\pi_F\left(v_1,\ \cdots,\ v_n\right)\ =\ \sum_{i=1}^n\ v_i,\]
for every $\left(v_1,\ \cdots,\ v_n\right)\ \in\ \textbf{ker}\left(F\right)$. 

\begin{definition}\label{Definition of syzygies}

The elements of $\ker\left(\pi_F\right)$ are called the \emph{syzygies} for $F$, and the set of syzygies for $F$ is denoted by $\syz{F}$. 

\end{definition}

In Section~\ref{Syzygies}, we construct a basis of $\syz{F}$. This construction requires to relate syzygies to the upper-bound of reduction operators. This link is given by the following proposition:

\begin{proposition}\label{Proposition on syzygies}

Let $P\ =\ \left\{T_1,\ T_2\right\}$ be a pair of reduction operators. We have an isomorphism:
\begin{equation}\label{Syzygies and upper bound of a pair}
\begin{split}
  \ker\left(T_1\vee T_2\right)\ &\overset{\sim}{\longrightarrow}\ \textbf{syz}\left(P\right).\\
  v &\longmapsto\ \left(-v,\ v\right)
\end{split}
\end{equation}

\end{proposition}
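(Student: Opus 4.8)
The plan is to unwind every definition in sight and verify that the displayed map is a well-defined linear bijection; the statement is essentially a tautology once the upper bound and the space of syzygies are written out explicitly.

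First I would record what the two sides actually are. By the definition of the upper bound, $T_1\vee T_2\ =\ \noy{\ker\left(T_1\right)\cap\ker\left(T_2\right)}$, and since $\ker^{-1}$ is a two-sided inverse of the kernel map (the bijection between $\RO$ and $\M{L}\left(\K{G}\right)$ recalled above), this gives $\ker\left(T_1\vee T_2\right)\ =\ \ker\left(T_1\right)\cap\ker\left(T_2\right)$. On the other side, an element of $\textbf{syz}\left(P\right)\ =\ \ker\left(\pi_P\right)$ is by definition a pair $\left(v_1,\ v_2\right)$ with $v_1\ \in\ \ker\left(T_1\right)$, $v_2\ \in\ \ker\left(T_2\right)$, and $\pi_P\left(v_1,\ v_2\right)\ =\ v_1+v_2\ =\ 0$, that is $v_2\ =\ -v_1$.

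With these two descriptions in hand, checking the claim is routine. For well-definedness: if $v\ \in\ \ker\left(T_1\vee T_2\right)\ =\ \ker\left(T_1\right)\cap\ker\left(T_2\right)$, then $-v\ \in\ \ker\left(T_1\right)$, $v\ \in\ \ker\left(T_2\right)$ and $(-v)+v\ =\ 0$, so $\left(-v,\ v\right)\ \in\ \ker\left(\pi_P\right)\ =\ \textbf{syz}\left(P\right)$. Linearity of $v\ \mapsto\ \left(-v,\ v\right)$ is immediate, and injectivity is clear since $\left(-v,\ v\right)\ =\ 0$ forces $v\ =\ 0$. For surjectivity: given $\left(v_1,\ v_2\right)\ \in\ \textbf{syz}\left(P\right)$, the relation $v_1+v_2\ =\ 0$ gives $v_1\ =\ -v_2$; setting $v\ =\ v_2$, we have $v\ \in\ \ker\left(T_2\right)$ and $-v\ =\ v_1\ \in\ \ker\left(T_1\right)$, hence $v\ \in\ \ker\left(T_1\right)\cap\ker\left(T_2\right)\ =\ \ker\left(T_1\vee T_2\right)$, and the map sends $v$ to $\left(-v,\ v\right)\ =\ \left(v_1,\ v_2\right)$.

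There is no genuine obstacle here: the argument is purely formal. The only point that deserves to be stated rather than taken for granted is the identity $\ker\left(T_1\vee T_2\right)\ =\ \ker\left(T_1\right)\cap\ker\left(T_2\right)$, which is exactly the content of the bijectivity of the kernel map; after that, the entire proof is the single observation that the fibre of $\pi_P$ over $0$ consists precisely of the pairs $\left(-v,\ v\right)$ with $v$ in both kernels.
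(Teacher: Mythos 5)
Your proof is correct and follows essentially the same route as the paper: both use the identity $\ker\left(T_1\vee T_2\right)\ =\ \ker\left(T_1\right)\cap\ker\left(T_2\right)$ to get well-definedness and then check injectivity and surjectivity directly from the condition $v_1+v_2=0$. No gaps; your write-up is merely a bit more explicit about linearity and the role of the kernel bijection.
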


\begin{proof}

Since $\ker\left(T_1\vee T_2\right)$ is equal to $\ker\left(T_1\right)\cap\ker\left(T_2\right)$, the map (\ref{Syzygies and upper bound of a pair}) is well-defined. Moreover, it is injective since $(-v,\ v)$ is equal to $(0,\ 0)$ if and only if $v$ is equal to $0$. Finally, it is surjective since $\left(v_1,\ v_2\right)$ belongs to $\syz{P}$ if and only if $v_2\ =\ -v_1$ and in this case, $v_2$  belongs to $\ker\left(T_1\right)\cap\ker\left(T_2\right)$.

\end{proof}

\subsection{Construction of a basis of syzygies}\label{Syzygies}

Throughout the section, we fix a set $F\ =\ \left\{T_1,\ \cdots,\ T_n\right\}$ of reduction operators.

For every $1\ \leq\ i\ \leq\ n$ and for every $g\ \in\ \red{T_i}$, we denote by
\[e_{i,g}\ =\ \big(0,\ \cdots,\ 0,\ g\ -\ T_i(g),\ 0,\ \cdots,\ 0\big),\]
where $g\ -\ T_i(g)$ is at position $i$. The set of all $e_{i,g}$'s is a basis of $\textbf{ker}\left(F\right)$. Moreover, we let $e_{i,g}\ \sqsubset\ e_{i',g'}$ if $i\ <\ i'$ or if $i\ =\ i'$ and $g\ <\ g'$. Such defined, $\sqsubset$ is a well-order, so that $\textbf{ker}\left(F\right)$ is a vector space equipped with a well-ordered basis.

\begin{remark}

By definition of syzygies, we have an isomorphism of vector spaces $\textbf{ker}(F)/\syz{F}\ \simeq\ \ker\left(\wedge F\right)$. From Lemma~\ref{Lemma on canonical decompositions}, $\ker\left(\wedge F\right)$ admits as a basis the set
\begin{equation}\label{Canonical basis}
\Big\{\pi_F\left(e_{i,g}\right)\ \mid\ e_{i,g}\ \notin\ \lt{\syz{F}}\Big\},
\end{equation}
where $\lt{\syz{F}}$ is the set of leading terms of elements of $\syz{F}$ for the order $\sqsubset$. Hence, every $v\ \in\ \ker\left(\wedge F\right)$ admits a unique decomposition
\begin{equation}\label{Canonical decomposition}
\begin{split}
v\ &=\ \sum_{i, g}\ \lambda_{i,g}\pi_F\left(e_{i,g}\right)\\
&=\ \sum_{i, g}\ \lambda_{i,g}\big(g\ -\ T_i(g)\big),
\end{split}
\end{equation}
where, for every index $(i,g)$ in the sum, $g$ belongs to $\red{T_i}$. The decomposition (\ref{Canonical decomposition}) in called the \emph{canonical decomposition} of $v$ with respect to $F$.

\end{remark}

\paragraph{Procedure for constructing a basis of $\syz{F}$.}

For every integer $i$ such that $2\ \leq\ i\ \leq\ n$, we consider the reduction operator
\begin{equation}\label{U}
U_{i-1}\ =\ T_1\wedge\cdots\wedge T_{i-1}.
\end{equation}
For every $g_0\ \in\ \red{U_{i-1}\vee T_i}$, we denote by
\begin{equation}\label{v}
v_{i, g_0}\ =\ g_0\ -\ \left(U_{i-1}\vee T_i\right)(g_0).
\end{equation}
The vector $v_{i,g_0}$ belongs to $\ker\left(U_{i-1}\right)\ =\ \ker\left(T_1\right)\ +\ \cdots\ +\ \ker\left(T_{i-1}\right)$ and to $\ker\left(T_i\right)$, so that it admits a canonical decomposition relative to $\left\{T_1,\ \cdots,\ T_{i-1}\right\}$ as well as well as a $T_i$-decomposition. Let
\[\sum_{j,g'}\ \lambda_{j,g'}\left(g'\ -\ T_j(g')\right)\ \ \text{and}\ \ \sum_g\ \lambda_g\left(g\ -\ T_i(g)\right),\]
be these two decompositions. We let:
\begin{equation}\label{s}
s_{i,g_0}\ =\ \sum_g\ \lambda_ge_{i,g}\ -\ \sum_{j,g'}\ \lambda_{j,g'}e_{j,g'}.
\end{equation}
We define by induction sets $B_1,\ \cdots,\ B_n$ in the following way: $B_1\ =\ \emptyset$ and for every $2\ \leq\ i\ \leq\ n$, 
\begin{equation}\label{B}
B_i\ =\ B_{i-1}\ \cup\Big\{s_{i,g_0}\ \mid\ g_0\ \in\ \red{U_{i-1}\vee T_i}\Big\}.
\end{equation}
\begin{theorem}\label{Correctness of the algorithm}

With the previous notations, $B_n$ is a basis of $\syz{F}$. 

\end{theorem}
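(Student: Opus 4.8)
The plan is to prove by induction on $i$ the stronger statement that $B_i$ is a basis of $\syz{T_1,\ \cdots,\ T_i}$ for every $1\ \leq\ i\ \leq\ n$; the case $i\ =\ n$ is the theorem. For $i\ =\ 1$ we have $\wedge\left\{T_1\right\}\ =\ T_1$, so that $\pi_{\left\{T_1\right\}}$ is the identity of $\ker\left(T_1\right)$ and $\syz{T_1}\ =\ 0$; since $B_1\ =\ \emptyset$, the base case holds.

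For the inductive step, fix $2\ \leq\ i\ \leq\ n$ and assume $B_{i-1}$ is a basis of $\syz{T_1,\ \cdots,\ T_{i-1}}$. By Lemma~\ref{Lemma for correctness}, $\syz{U_{i-1},\ T_i}$ is isomorphic to a supplement $W_i$ of $\syz{T_1,\ \cdots,\ T_{i-1}}$ in $\syz{T_1,\ \cdots,\ T_i}$, and Proposition~\ref{Proposition for correctness} describes this supplement explicitly through the operator $U_{i-1}\vee T_i$. On the other hand, Proposition~\ref{Proposition on syzygies} gives an isomorphism $\ker\left(U_{i-1}\vee T_i\right)\ \simeq\ \syz{U_{i-1},\ T_i}$, $v\ \mapsto\ (-v,\ v)$, and the vectors $v_{i,g_0}\ =\ g_0\ -\ \left(U_{i-1}\vee T_i\right)(g_0)$, for $g_0\ \in\ \red{U_{i-1}\vee T_i}$, form a basis of $\ker\left(U_{i-1}\vee T_i\right)$ since, for any reduction operator $T$, the elements $g\ -\ T(g)$ with $g\ \in\ \red{T}$ form a basis of $\ker\left(T\right)$. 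The crux is to check that, under the composite $\ker\left(U_{i-1}\vee T_i\right)\ \simeq\ \syz{U_{i-1},\ T_i}\ \simeq\ W_i$ coming from Proposition~\ref{Proposition for correctness}, the basis vector $v_{i,g_0}$ is sent to $s_{i,g_0}$. Granting this, $\left\{s_{i,g_0}\ \mid\ g_0\ \in\ \red{U_{i-1}\vee T_i}\right\}$ is a basis of $W_i$; combining it with $B_{i-1}$ and using $\syz{T_1,\ \cdots,\ T_i}\ =\ \syz{T_1,\ \cdots,\ T_{i-1}}\ \oplus\ W_i$ shows that $B_i$ is a basis of $\syz{T_1,\ \cdots,\ T_i}$, which completes the induction.

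To establish the crux, I first observe that $s_{i,g_0}$ does lie in $\syz{T_1,\ \cdots,\ T_i}$: applying the relevant map $\pi$ to the right-hand side of (\ref{s}) yields $\sum_g\ \lambda_g\big(g\ -\ T_i(g)\big)\ -\ \sum_{j,g'}\ \lambda_{j,g'}\big(g'\ -\ T_j(g')\big)$, which equals $v_{i,g_0}\ -\ v_{i,g_0}\ =\ 0$ by the definitions of the $T_i$-decomposition and of the canonical decomposition of $v_{i,g_0}$ with respect to $\left\{T_1,\ \cdots,\ T_{i-1}\right\}$. Secondly, I would unwind the isomorphism of Proposition~\ref{Proposition for correctness}: it sends $(-v,\ v)\ \in\ \syz{U_{i-1},\ T_i}$, where $-v\ \in\ \ker\left(U_{i-1}\right)\ =\ \ker\left(T_1\right)\ +\ \cdots\ +\ \ker\left(T_{i-1}\right)$ and $v\ \in\ \ker\left(T_i\right)$, to the element of $\ker\left(T_1\right)\ \times\ \cdots\ \times\ \ker\left(T_i\right)$ obtained by splitting $-v$ along its canonical decomposition relative to $\left\{T_1,\ \cdots,\ T_{i-1}\right\}$ and $v$ along its $T_i$-decomposition; for $v\ =\ v_{i,g_0}$ this is precisely the right-hand side of (\ref{s}), so $v_{i,g_0}\ \mapsto\ s_{i,g_0}$ as claimed.

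The main obstacle is the bookkeeping in this last step: matching the abstract supplement isomorphism of Proposition~\ref{Proposition for correctness} with the concrete formula (\ref{s}) requires care about the uniqueness of the two decompositions of $v_{i,g_0}$ and about how the sum decomposition $\ker\left(U_{i-1}\right)\ =\ \sum_{j<i}\ker\left(T_j\right)$ interacts with $\pi$. Once Lemma~\ref{Lemma for correctness} and Proposition~\ref{Proposition for correctness} are available, no further deep step is needed; the substance of the argument is carried by those two intermediate results, and the proof of Theorem~\ref{Correctness of the algorithm} itself is just the induction assembling them.
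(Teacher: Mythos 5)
Your proposal is correct and follows essentially the same route as the paper: induction on $i$, with the inductive step supplied by the direct-sum decomposition of Proposition~\ref{Proposition for correctness}, and the identification of the $s_{i,g_0}$ with a basis of the supplement resting on the computation $\pi_i\left(s_{i,g_0}\right)\ =\ \left(-v_{i,g_0},\ v_{i,g_0}\right)$, which is exactly part \textbf{ii.} of Lemma~\ref{Lemma for correctness}. The ``crux'' you re-derive by unwinding the isomorphism is already contained in that lemma and in the proof of Proposition~\ref{Proposition for correctness}, so no new argument is needed beyond the induction you describe.
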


The proof of Theorem~\ref{Correctness of the algorithm} is done at the end of the section. This is a consequence of Proposition~\ref{Proposition for correctness}, which we prove using intermediate results of Lemma~\ref{Lemma for correctness}. For that, we need to fix some notations.

\paragraph{Notations.}

For every integer $i$ such that $2\leq\ i\leq n$, we define $U_{i-1}$,  $v_{i,g_0}$ and $s_{i,g_0}$ such as in (\ref{U}), (\ref{v}) and (\ref{s}), respectively and we consider the following maps:
\begin{itemize}
\item[\textbf{i.}] $\iota_i\ :\ \syz{T_1,\ \cdots,\ T_{i-1}}\ \longrightarrow\ \syz{T_1,\ \cdots,\ T_{i}},\ \left(v_1,\ \cdots,\ v_{i-1}\right)\ \longmapsto\ \left(v_1,\ \cdots,\ v_{i-1},\ 0\right)$,
\item[\textbf{ii.}] $\pi_i\ :\ \ker\left(T_1\right)\ \times\ \cdots\ \times\ \ker\left(T_i\right)\ \F\ \ker\left(U_{i-1}\right)\ \times\ \ker\left(T_i\right),\ \left(v_1,\ \cdots,\ v_i\right)\ \longmapsto\ \left(v_1\ +\ \cdots\ +\ v_{i-1},\ v_i\right)$,
\item[\textbf{iii.}] $\tilde{\pi_i}\ :\ \syz{T_1,\ \cdots,\ T_i}\ \F\ \syz{U_{i-1},\ T_i},\ \left(v_1,\ \cdots,\ v_i\right)\ \longmapsto\ \left(v_1\ +\ \cdots\ +\ v_{i-1},\ v_i\right)$.
\end{itemize}
Moreover, we abuse notations in the following ways:
\begin{itemize}
\item[\textbf{i.}] given two integers $i$ and $j$ such that $2\ \leq\ j\ \leq\ i\ \leq\ n$, we still denote by $e_{j,g}$ and $s_{j,g}$ their images by the natural projection of $\textbf{ker}(F)$ on $\ker\left(T_1\right)\ \times\ \cdots\ \times\ \ker\left(T_i\right)$,
\item[\textbf{ii.}] using the injection $\iota_i$, we consider that we have $\syz{T_1,\ \cdots,\ T_{i-1}}\ \subseteq\ \syz{T_1,\ \cdots,\ T_i}$, for every integer $i$ such that $2\ \leq\ i\ \leq\ n$.
\end{itemize}

\begin{lemma}\label{Lemma for correctness}

Let i be an integer such that $2\leq\ i\ \leq\ n$.
\begin{itemize}
\item[\textbf{i}.] We have $\imm{\iota_i}\ =\ \ker\left(\tilde{\pi}_i\right)$.
\item[\textbf{ii.}] For every $g_0\ \in\ \redd{U_{i-1}\vee T_i}$, we have
\[\pi_i\left(s_{i,g_0}\right)\ =\ \left(-v_{i,g_0},\ v_{i,g_0}\right).\]
\end{itemize}

\end{lemma}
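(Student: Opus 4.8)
The plan is to prove the two statements of Lemma~\ref{Lemma for correctness} separately, both being essentially bookkeeping once the right identifications are made.

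For part \textbf{i}, the inclusion $\imm{\iota_i}\ \subseteq\ \ker\left(\tilde{\pi}_i\right)$ is immediate: if $\left(v_1,\ \cdots,\ v_{i-1}\right)$ lies in $\syz{T_1,\ \cdots,\ T_{i-1}}$ then $v_1\ +\ \cdots\ +\ v_{i-1}\ =\ 0$, so its image $\left(v_1,\ \cdots,\ v_{i-1},\ 0\right)$ is sent by $\tilde{\pi}_i$ to $\left(0,\ 0\right)$. For the reverse inclusion, suppose $\left(v_1,\ \cdots,\ v_i\right)\ \in\ \syz{T_1,\ \cdots,\ T_i}$ is in $\ker\left(\tilde{\pi}_i\right)$; then $v_1\ +\ \cdots\ +\ v_{i-1}\ =\ 0$ and $v_i\ =\ 0$. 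The first equation says exactly that $\left(v_1,\ \cdots,\ v_{i-1}\right)$ is a syzygy for $\left\{T_1,\ \cdots,\ T_{i-1}\right\}$, and the second says the element equals $\iota_i\left(v_1,\ \cdots,\ v_{i-1}\right)$. So the two sets coincide.

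For part \textbf{ii}, I would just unwind the definition of $s_{i,g_0}$ in~(\ref{s}) and apply $\pi_i$. Writing the canonical decomposition of $v_{i,g_0}$ relative to $\left\{T_1,\ \cdots,\ T_{i-1}\right\}$ as $\sum_{j,g'}\ \lambda_{j,g'}\left(g'\ -\ T_j(g')\right)$ and its $T_i$-decomposition as $\sum_g\ \lambda_g\left(g\ -\ T_i(g)\right)$, by definition $\pi_i$ sends the term $\sum_g\ \lambda_g e_{i,g}$ (which sits entirely in the $i$-th slot) to $\left(0,\ \sum_g\ \lambda_g(g\ -\ T_i(g))\right)\ =\ \left(0,\ v_{i,g_0}\right)$, and it sends $-\sum_{j,g'}\ \lambda_{j,g'}e_{j,g'}$ (supported in slots $1,\ \cdots,\ i-1$) to $\left(-\sum_{j,g'}\ \lambda_{j,g'}(g'\ -\ T_j(g')),\ 0\right)\ =\ \left(-v_{i,g_0},\ 0\right)$. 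Adding these gives $\pi_i\left(s_{i,g_0}\right)\ =\ \left(-v_{i,g_0},\ v_{i,g_0}\right)$, as claimed. One should also check that $s_{i,g_0}$ genuinely lies in $\syz{T_1,\ \cdots,\ T_i}$, i.e.\ that $\pi_{\left\{T_1,\ \cdots,\ T_i\right\}}\left(s_{i,g_0}\right)\ =\ 0$: summing all components of $s_{i,g_0}$ yields $v_{i,g_0}\ -\ v_{i,g_0}\ =\ 0$ since both decompositions sum to $v_{i,g_0}$.

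The only genuine subtlety — and the step I would be most careful about — is the abuse of notation flagged just before the lemma: $e_{j,g}$ and $s_{j,g}$ are being viewed inside $\ker\left(T_1\right)\ \times\ \cdots\ \times\ \ker\left(T_i\right)$ via the truncation projection, and $v_{i,g_0}$ is being used both as an element of $\ker\left(U_{i-1}\right)$ and of $\ker\left(T_i\right)$ via its two different decompositions. I would make explicit at the start of the computation that $v_{i,g_0}$ lies in $\ker\left(U_{i-1}\right)\ =\ \ker\left(T_1\right)\ +\ \cdots\ +\ \ker\left(T_{i-1}\right)$ and in $\ker\left(T_i\right)$ — which holds because $U_{i-1}\vee T_i\ =\ \noy{\ker\left(U_{i-1}\right)\cap\ker\left(T_i\right)}$, so $v_{i,g_0}\ =\ g_0\ -\ \left(U_{i-1}\vee T_i\right)(g_0)$ is in that intersection — so that both decompositions, and hence $s_{i,g_0}$, are well-defined. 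Everything else is routine linearity.
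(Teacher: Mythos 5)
Your proof is correct and follows essentially the same route as the paper's: part \textbf{i} by unwinding the definitions of $\iota_i$ and $\tilde{\pi}_i$, and part \textbf{ii} by applying $\pi_i$ separately to the two decompositions appearing in the definition of $s_{i,g_0}$ and subtracting. The extra checks you add (that $v_{i,g_0}$ lies in $\ker\left(U_{i-1}\right)\cap\ker\left(T_i\right)$, and that $s_{i,g_0}$ is indeed a syzygy) are handled in the paper in the setup preceding the lemma, so nothing is missing.
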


\begin{proof}

First, we show \textbf{i.} An element $\textbf{v}\ =\ \left(v_1,\ \cdots,\ v_{i}\right)\ \in\ \syz{T_1,\ \cdots,\ T_i}$ belongs to the kernel of $\tilde{\pi}_i$ if and only if $v_i\ =\ -\left(v_1\ +\ \cdots\ +\ v_{i-1}\right)$ is equal to $0$. Hence, \textbf{v} belongs to the the kernel of $\tilde{\pi}_i$ if and only if it belongs to the image of $\iota_i$.

Let us show \textbf{ii.} Let
\begin{equation}\label{canonical with respect to}
\sum_{j, g'}\ \lambda_{j,g'}\left(g'\ -\ T_j(g')\right),
\end{equation}
be the canonical decomposition of $v_{i,g_0}$ with respect to $\left\{T_1,\ \cdots,\ T_i\right\}$. Every index $j$ of the sum (\ref{canonical with respect to}) is strictly smaller than $i$, so that we have
\[\begin{split}
\pi_i\left(\sum_{j, g'}\ \lambda_{j,g'}e_{j,g'}\right)\ &=\ \left( \sum_{j, g'}\ \lambda_{j,g'}\left(g'\ -\ T_j(g')\right),\ 0\right).
\end{split}\]
Moreover, letting $\sum_{g}\ \lambda_g\left(g\ -\ T_i(g)\right)$ the canonical the $T_i$-decomposition of $v_{i,g_0}$, we have
\[\begin{split}
\pi_i\left(\sum_{g}\ \lambda_ge_{i,g}\right)\ &=\ \left( 0,\ \sum_{g}\ \lambda_g\left(g\ -\ T_i(g)\right)\right).
\end{split}\]
Hence, we have
\[\begin{split}
\pi_i\left(s_{i,g_0}\right)\ &=\ \pi_i\left(\sum_{g}\ \lambda_ge_{i,g}\ -\ \sum_{j, g'}\ \lambda_{j,g'}e_{j,g'}\right) \\
&=\ \left( 0,\ \sum_{g}\ \lambda_g\left(g\ -\ T_i(g)\right)\right)\ -\  \left( \sum_{j, g'}\ \lambda_{j,g'}\left(g'\ -\ T_j(g')\right),\ 0\right)\\
&=\ \left(-v_{i,g_0},\ v_{i,g_0}\right).
\end{split}\]

\end{proof}

\begin{proposition}\label{Proposition for correctness}

Let i be an integer such that $2\leq\ i\ \leq\ n$. We have the following direct sum decomposition:
\[ \syz{T_1,\ \cdots,\ T_i}\ =\ \imm{\iota_i}\ \oplus\ \K{\Big\{s_{i,g_0} \mid\ g_0\ \in\ \redd{U_{i-1}\vee T_i}\Big\}}.\]

\end{proposition}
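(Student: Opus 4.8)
The plan is to prove the direct sum decomposition in two stages: first that the sum is direct, then that it spans all of $\syz{T_1,\ \cdots,\ T_i}$. For directness, I would use Proposition~\ref{Proposition on syzygies} together with part \textbf{ii.} of Lemma~\ref{Lemma for correctness}. Indeed, the isomorphism $\ker\left(U_{i-1}\vee T_i\right)\ \overset{\sim}{\longrightarrow}\ \syz{U_{i-1},\ T_i}$ of Proposition~\ref{Proposition on syzygies} sends $v\ \mapsto\ (-v,\ v)$; since $\ker\left(U_{i-1}\vee T_i\right)$ has $\left\{v_{i,g_0}\ \mid\ g_0\ \in\ \redd{U_{i-1}\vee T_i}\right\}$ as a basis (it is the basis coming from the $\left(U_{i-1}\vee T_i\right)$-decomposition, via~(\ref{T-decompositions})), the family $\left\{(-v_{i,g_0},\ v_{i,g_0})\right\}$ is a basis of $\syz{U_{i-1},\ T_i}$. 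By Lemma~\ref{Lemma for correctness}~\textbf{ii.}, $\pi_i\left(s_{i,g_0}\right)\ =\ \left(-v_{i,g_0},\ v_{i,g_0}\right)$, and since $\pi_i$ restricts to $\tilde{\pi}_i$ on $\syz{T_1,\ \cdots,\ T_i}$ (this needs a one-line check that $\pi_i$ maps syzygies to syzygies), the $s_{i,g_0}$ form a linearly independent family whose image under $\tilde{\pi}_i$ is linearly independent; hence $\K\left\{s_{i,g_0}\right\}$ meets $\ker\left(\tilde{\pi}_i\right)\ =\ \imm{\iota_i}$ (using Lemma~\ref{Lemma for correctness}~\textbf{i.}) only in $0$, and the sum is direct.

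For the spanning claim, I would argue that $\tilde{\pi}_i$ is surjective and that its restriction to $\K\left\{s_{i,g_0}\right\}$ is already surjective onto $\syz{U_{i-1},\ T_i}$. Surjectivity of $\tilde{\pi}_i$ onto $\syz{U_{i-1},\ T_i}$: given $(w_1,\ w_2)\ \in\ \syz{U_{i-1},\ T_i}$, write $w_1\ \in\ \ker\left(U_{i-1}\right)\ =\ \ker\left(T_1\right)\ +\ \cdots\ +\ \ker\left(T_{i-1}\right)$ as $w_1\ =\ u_1\ +\ \cdots\ +\ u_{i-1}$ with $u_j\ \in\ \ker\left(T_j\right)$, and note that $\left(u_1,\ \cdots,\ u_{i-1},\ w_2\right)$ lies in $\syz{T_1,\ \cdots,\ T_i}$ and maps to $(w_1,\ w_2)$. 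But more directly: the family $\left\{(-v_{i,g_0},\ v_{i,g_0})\right\}$ is already a \emph{basis} of $\syz{U_{i-1},\ T_i}$ as noted above, and it equals $\left\{\tilde{\pi}_i\left(s_{i,g_0}\right)\right\}$, so $\tilde{\pi}_i$ maps $\K\left\{s_{i,g_0}\right\}$ onto $\syz{U_{i-1},\ T_i}$. Now take any $\textbf{v}\ \in\ \syz{T_1,\ \cdots,\ T_i}$; choose coefficients so that $s\ =\ \sum\ \mu_{g_0}s_{i,g_0}$ has $\tilde{\pi}_i(s)\ =\ \tilde{\pi}_i(\textbf{v})$; then $\textbf{v}\ -\ s\ \in\ \ker\left(\tilde{\pi}_i\right)\ =\ \imm{\iota_i}$, giving $\textbf{v}\ \in\ \imm{\iota_i}\ +\ \K\left\{s_{i,g_0}\right\}$. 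Combined with directness, this yields the decomposition.

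The one point requiring a little care — and the main obstacle — is the interplay between $\pi_i$ and $\tilde{\pi}_i$ and, underlying it, the verification that $\left\{(-v_{i,g_0},\ v_{i,g_0})\right\}$ really is a basis of $\syz{U_{i-1},\ T_i}$: this rests on identifying $\left\{v_{i,g_0}\ \mid\ g_0\ \in\ \redd{U_{i-1}\vee T_i}\right\}$ as the canonical basis of $\ker\left(U_{i-1}\vee T_i\right)$ coming from~(\ref{T-decompositions}) applied to the operator $U_{i-1}\vee T_i$, and then transporting it along the isomorphism of Proposition~\ref{Proposition on syzygies}. Once that basis statement is in hand, linear independence of the $s_{i,g_0}$ is automatic (a linear combination mapping to $0$ under $\tilde{\pi}_i$ must have all coefficients zero by independence of the $(-v_{i,g_0},\ v_{i,g_0})$, since $\pi_i(s_{i,g_0})$ has no component outside positions handled by $\tilde{\pi}_i$). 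I should also remark at the start that $s_{i,g_0}\ \in\ \syz{T_1,\ \cdots,\ T_i}$, i.e. that $\pi_{\{T_1,\cdots,T_i\}}\left(s_{i,g_0}\right)\ =\ 0$: this is immediate from~(\ref{s}) since $v_{i,g_0}$ equals both $\sum_{j,g'}\lambda_{j,g'}\left(g'-T_j(g')\right)$ and $\sum_g\lambda_g\left(g-T_i(g)\right)$, so the two sums defining $s_{i,g_0}$ have the same image under $\pi_{\{T_1,\cdots,T_i\}}$ and cancel.
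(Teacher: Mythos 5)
Your proof is correct and follows essentially the same route as the paper's: both identify $\left\{\left(-v_{i,g_0},\ v_{i,g_0}\right)\right\}$ as a basis of $\syz{U_{i-1},\ T_i}$ via Proposition~\ref{Proposition on syzygies}, use Lemma~\ref{Lemma for correctness} to compute $\tilde{\pi}_i\left(s_{i,g_0}\right)$ and to identify $\ker\left(\tilde{\pi}_i\right)$ with $\imm{\iota_i}$, and conclude that the span of the $s_{i,g_0}$ is a supplement of $\imm{\iota_i}$ in $\syz{T_1,\ \cdots,\ T_i}$. The only difference is expository: you separate the directness and spanning checks and make explicit the small verifications (that $s_{i,g_0}$ is a syzygy and that $\pi_i$ restricts to $\tilde{\pi}_i$) which the paper leaves implicit.
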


\begin{proof}

The set of all $v_{i,g_0}$, where $g_0$ belongs to $\red{U_{i-1}\vee T_i}$, is a basis of $\ker\left(U_{i-1}\vee T_i\right)$, so that the set of pairs $\left(-v_{i,g_0},\ v_{i,g_0}\right)$, where $g_0$ belongs to $\red{U_{i-1}\vee T_i}$, is a basis of $\syz{U_{i-1},\ T_i}$ from Proposition~\ref{Proposition on syzygies}. The morphism $\tilde{\pi}_i$ is surjective, so that we have $\im{\tilde{\pi}_i}\ =\ \syz{U_{i-1},\ T_i}$. Hence, from \textbf{ii.} of Lemma~\ref{Lemma for correctness}, $\tilde{\pi}_i$ induces an isomorphism between the vector space $V_i$ spanned by elements $s_{i,g_0}$, where $g_0$ belongs to $\red{U_{i-1}\vee T_i}$, and $\im{\tilde{\pi}_i}$. In particular, $V_i$ is a supplement of $\ker\left(\tilde{\pi}_i\right)$ in $\syz{T_1,\ \cdots,\ T_i}$. From \textbf{i.} of Lemma~\ref{Lemma for correctness}, $\ker\left(\tilde{\pi}_i\right)$ is equal to $\im{\iota_i}$, which proves Proposition~\ref{Proposition for correctness}.

\end{proof}

Now, we can show Theorem~\ref{Correctness of the algorithm}.

\begin{proof}[Proof of theorem~\ref{Correctness of the algorithm}]

We show by induction that for every integer $i$ such that $1\ \leq\ i\ \leq\ n$, the set $B_i$ obtained in~\ref{B} of the procedure is a basis of $\syz{T_1,\ \cdots,\ T_i}$. If $i$ is equal to $1$, there is nothing to prove since $\syz{T_1}$ is reduced to $\{0\}$. Let $i$ be an integer such that $2\ \leq\ i\ \leq\ n$ and assume by induction hypothesis that $B_{i-1}$ is a basis of $\syz{T_1,\ \cdots,\ T_{i-1}}$. From Proposition~\ref{Proposition for correctness}
\[B_i\ =\ B_{i-1}\ \cup\Big\{s_{i,g_0}\ \mid\ g_0\ \in\ \red{U_{i-1}\vee T_i}\Big\},\]
is a basis of $\syz{T_1,\ \cdots,\ T_i}$. Hence, $B_n$ is a basis of $\syz{T_1,\ \cdots,\ T_n}\ =\ \syz{F}$.

\end{proof}

We deduce the following lattice description of the set of leading terms of syzygies:

\begin{proposition}\label{Leading monomials of syzygies}

Let $F\ =\ \left\{T_1,\ \cdots,\ T_n\right\}$ be a finite set of reduction operators. We have
\[\ltt{\syz{F}}\ =\ \Big\{e_{i,g_0}\ \mid\ 2\ \leq\ i\ \leq\ n\ \ and\ \ g_0\ \in\ \redd{U_{i-1}\vee T_i}\Big\}.\]

\end{proposition}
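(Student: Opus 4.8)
The plan is to deduce Proposition~\ref{Leading monomials of syzygies} directly from the explicit basis $B_n$ constructed in Theorem~\ref{Correctness of the algorithm}, by analysing the leading term (for the order $\sqsubset$) of each generator $s_{i,g_0}$. The key observation is that the generators come indexed by pairs $(i,g_0)$ with $2\le i\le n$ and $g_0\in\red{U_{i-1}\vee T_i}$, so if I can show that $\lt{s_{i,g_0}}=e_{i,g_0}$ for each such pair, then the set of leading terms of the basis is exactly the claimed set; and since the leading terms of a basis of a subspace $W$ of a space with well-ordered basis always coincide with $\lt{W}$ (this is the standard "staircase" fact underlying reduced bases, already used implicitly in the paper via Lemma~\ref{Lemma on canonical decompositions}), this finishes the proof.

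So the first step is to recall/justify the general fact: if $\M{B}$ is any basis of a subspace $W\subseteq\K{G'}$ (here $G'$ is the $\sqsubset$-well-ordered basis of $\textbf{ker}(F)$ consisting of the $e_{i,g}$), then $\lt{W}=\lt{\M{B}}$. One inclusion is trivial; for the other, given $w\in W$ write $w$ as a combination of basis elements and induct on $\sqsubset$ to rewrite so that the leading term is realized by some $e\in\M{B}$ — this is exactly the argument that produces the unique reduced basis cited from \cite[Theorem 2.1.13]{chenavier:hal-01325907}, so I would simply invoke that result rather than reprove it.

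The second and main step is to compute $\lt{s_{i,g_0}}$. Recall from~\eqref{s} that
\[
s_{i,g_0}\ =\ \sum_g\ \lambda_g e_{i,g}\ -\ \sum_{j,g'}\ \lambda_{j,g'}e_{j,g'},
\]
where $\sum_g\lambda_g(g-T_i(g))$ is the $T_i$-decomposition of $v_{i,g_0}$ and $\sum_{j,g'}\lambda_{j,g'}(g'-T_j(g'))$ is its canonical decomposition relative to $\{T_1,\dots,T_{i-1}\}$; in particular every $j$ appearing satisfies $j\le i-1<i$. Hence all basis vectors $e_{j,g'}$ occurring in the second sum are $\sqsubset$-smaller than every $e_{i,g}$ occurring in the first, so $\lt{s_{i,g_0}}$ is the $\sqsubset$-largest $e_{i,g}$ with $\lambda_g\neq 0$, i.e.\ it is $e_{i,g^\ast}$ where $g^\ast=\lt{v_{i,g_0}}$ (viewing $v_{i,g_0}\in\ker(T_i)$ via its $T_i$-decomposition, whose support of "$g$"-indices has maximum $\lt{v_{i,g_0}}$, since $T_i(g)<g$). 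But $v_{i,g_0}=g_0-(U_{i-1}\vee T_i)(g_0)$ with $(U_{i-1}\vee T_i)(g_0)<g_0$, so $\lt{v_{i,g_0}}=g_0$, giving $\lt{s_{i,g_0}}=e_{i,g_0}$, as wanted.

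The third step is bookkeeping: the map $(i,g_0)\mapsto e_{i,g_0}$ on $\{(i,g_0): 2\le i\le n,\ g_0\in\red{U_{i-1}\vee T_i}\}$ is injective (distinct pairs give distinct basis vectors), so the $s_{i,g_0}$ have pairwise distinct leading terms; combined with Step~1 this yields $\lt{\syz{F}}=\lt{B_n}=\{e_{i,g_0}\}$ exactly as stated. \textbf{The step I expect to require the most care} is the leading-term computation in Step~2 — specifically confirming that the two decompositions of $v_{i,g_0}$ really do place all $e_{j,g'}$ with $j<i$ strictly below all $e_{i,g}$ under $\sqsubset$ (immediate from the definition $e_{i,g}\sqsubset e_{i',g'}$ iff $i<i'$ or ($i=i'$ and $g<g'$)), and that within the $T_i$-part the maximal index is genuinely $g_0$ rather than something arising from cancellation — here one uses that in a $T_i$-decomposition $\sum_g\lambda_g(g-T_i(g))$ the terms $g$ with $g\in\red{T_i}$ are "leading" and do not cancel against the $T_i(g)$ parts, a fact already recorded in the paper when it says the $g-T_i(g)$ for $g\in\red{T_i}$ form a basis of $\ker(T_i)$ and $\lt{g-T_i(g)}=g$.
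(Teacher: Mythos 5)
Your overall route is the paper's own: compute $\lt{s_{i,g_0}}=e_{i,g_0}$ and read off the leading terms of $\syz{F}$ from the basis $B_n$ of Theorem~\ref{Correctness of the algorithm} (your Step~2 is in fact more detailed than the paper, which dismisses the leading-term computation as immediate from the definitions, and it is correct). However, your Step~1 contains a genuine error: it is \emph{not} true that for an arbitrary basis $\M{B}$ of a subspace $W$ of a space with well-ordered basis one has $\lt{W}=\lt{\M{B}}$. For a counterexample, take $W=\K{\{e_1,e_2\}}$ with $e_1\sqsubset e_2$ and the basis $\M{B}=\{e_2,\ e_1+e_2\}$: then $\lt{\M{B}}=\{e_2\}$ while $e_1\in\lt{W}$. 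The argument you sketch (rewriting a combination by induction on $\sqsubset$) is the construction of the \emph{reduced} basis of $W$; it changes the basis, so it shows that $\lt{W}$ equals the leading terms of the reduced basis, not of the given $\M{B}$, and the citation of \cite[Theorem 2.1.13]{chenavier:hal-01325907} does not supply the general claim either.

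The correct statement you need, and the one the paper uses, is: if the elements of a basis have \emph{pairwise distinct} leading terms, then for any nonzero $w=\sum_k\mu_k b_k$ the $\sqsubset$-largest $\lt{b_k}$ with $\mu_k\neq 0$ cannot cancel, so $\lt{w}=\max\{\lt{b_k}\mid\mu_k\neq 0\}\in\lt{\M{B}}$, whence $\lt{\K{\M{B}}}=\lt{\M{B}}$. You actually establish the needed distinctness in your Step~3 (the $s_{i,g_0}$ have pairwise distinct leading terms $e_{i,g_0}$), but you use it only as bookkeeping to identify the index set, while relying on the false general fact of Step~1 for the crucial equality $\lt{\syz{F}}=\lt{B_n}$. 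The repair is a single sentence: invoke the distinctness of the $e_{i,g_0}$ exactly at that point, which is precisely how the paper concludes ($\lt{\K{B_n}}=\lt{B_n}$ because the leading terms of the elements of $B_n$ are pairwise distinct).
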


\begin{proof}

By definition, for every $2\ \leq\ i\ \leq\ n$ and for every $g_0\ \in\ \red{U_{i-1}\vee T_i}$, $\lt{s_{i,g_0}}$ is equal to $e_{i,g_0}$. Hence, the leading terms of the elements of $B_n$ are pairwise distinct, so that we have
\[\begin{split}
\lt{\K{B_n}} & =\ \lt{B_n}\\
&=\ \Big\{e_{i,g_0}\ \mid\ 2\ \leq\ i\ \leq\ n\ \ \text{and}\ \ g_0\ \in\ \red{U_{i-1}\vee T_i}\Big\}.
\end{split}\]
From Theorem~\ref{Correctness of the algorithm}, $B_n$ is a basis of $\syz{F}$, so that Proposition~\ref{Leading monomials of syzygies} holds.

\end{proof}

\subsection{Illustration}\label{Illustration}

In this section we illustrate the construction of $B_n$ with an example. For that, we use the implementation of the lattice structure of reduction operators available online\footnote{\url{	https://pastebin.com/Ds5haArH}}.

\paragraph{Notations.}

We consider $G\ =\ \left\{g_1\ <\ g_2\ <\ g_3\ <\ g_4\ <\ g_5\right\}$. We let $F\ =\ \left\{T_1,\ T_2,\ T_3,\ T_4,\ T_5\right\}$, where the operators $T_i$ are defined by their matrices with respect to the basis $G$:
\[
T_1\ =\ \begin{pmatrix}
1&0&0&0&0\\
0&1&0&0&0\\
0&0&1&0&1\\
0&0&0&1&0\\
0&0&0&0&0
\end{pmatrix},\  \
T_2\ =\ \begin{pmatrix}
1&0&0&0&0\\
0&1&1&0&1\\
0&0&0&0&0\\
0&0&0&1&0\\
0&0&0&0&0
\end{pmatrix},\ \
T_3\ =\ \begin{pmatrix}
1&0&0&0&1\\
0&1&0&0&0\\
0&0&1&0&0\\
0&0&0&1&0\\
0&0&0&0&0
\end{pmatrix}\]
\vspace*{0.5cm}
\[T_4\ =\ \begin{pmatrix}
1&0&0&0&0\\
0&1&0&0&0\\
0&0&1&1&0\\
0&0&0&0&0\\
0&0&0&0&1
\end{pmatrix}\ \ \text{and}\ \
T_5\ =\ \begin{pmatrix}
1&0&0&1&0\\
0&1&0&0&0\\
0&0&1&0&0\\
0&0&0&0&0\\
0&0&0&0&1
\end{pmatrix}.
\vspace*{0.5cm}
\]
The vector space $\textbf{ker}(F)$  is spanned by the following eight vectors:
\[e_{1,g_5}\ =\ \left(g_5\ -\ g_3,\ 0,\ 0,\ 0,\ 0\right),\ e_{2,g_3}\ =\ \left(0,\ g_3\ -\ g_2 ,\  0,\ 0,\ 0\right),\ e_{2,g_5}\ =\ \left(0,\ g_5\ -\ g_2,\ 0,\ 0,\ 0\right)\]
\[e_{3,g_5}\ =\ \left(0,\ 0,\ g_5\ -\ g_1 ,\ 0,\ 0\right),\ e_{4,g_4}\ =\ \left(0,\ 0,\ 0,\ g_4\ -\ g_3,\ 0\right),\ e_{5,g_4}\ =\ \left(0,\ 0,\ 0,\ 0,\ g_4\ -\ g_1\right).\]

We simplify notations:
\[e_1\ =\ e_{1,g_5},\ e_2\ =\ e_{2,g_3},\ e_3\ =\ e_{2,g_5}\]
\[e_4\ =\ e_{3,g_5},\ e_5\ =\ e_{4,g_4},\ e_6\ =\ e_{5,g_4}.\]
In particular, we have
$e_1\ <\ e_2\ <\ \cdots\ <\ e_6$. Moreover, as done in the previous section, we let $U_{i-1}\ =\ T_1\wedge\cdots\wedge T_{i-1}$, for $2\ \leq\ i\ \leq\ 5$.

\paragraph{Step 1.}

We have $B_1\ =\ \emptyset$.

\paragraph{Step 2.}

We have
\[U_1\vee T_2\ =\ \begin{pmatrix}
1&0&0&0&0\\
0&1&0&0&0\\
0&0&1&0&1\\
0&0&0&1&0\\
0&0&0&0&0
\end{pmatrix}.
\vspace*{0.5cm}\]
The set $\red{T_1\vee T_2}$ is reduced to $\{g_5\}$ and $g_5\ -\ \left(T_1\vee T_2\right)(g_5)$ is equal to $g_5\ -\ g_3$. We have
\[\begin{split}
g_5\ -\ g_3\ &=\ \Big(g_5\ -\ T_1(g_5)\Big),
\end{split}\]
and its $T_2$-decomposition is
\[\begin{split}
g_5\ -\ g_3\ &=\ \Big(g_5\ -\ g_2\Big)\ -\ \Big(g_3\ -\ g_2\Big)\\
&=\ \Big(g_5\ -\ T_2(g_5)\Big)\ -\ \Big(g_3\ -\ T_2(g_3)\Big).
\end{split}\]
Hence, we get $B_2\ =\ \Big\{e_3\ -\ e_2\ -\ e_1\Big\}$.

\paragraph{Step 3.}

The operator $U_2\vee T_3$ is equal to the identity of $\K{G}$, so that we have $B_3\ =\ B_2$.

\paragraph{Step 4.}

The operator $U_3\vee T_4$ is equal to the identity of $\K{G}$, so that we have $B_4\ =\ B_3$.

\paragraph{Step 5.}

We have
\[U_4\vee T_5\ =\ \begin{pmatrix}
1&0&0&1&0\\
0&1&0&0&0\\
0&0&1&0&0\\
0&0&0&0&0\\
0&0&0&0&1
\end{pmatrix}.
\vspace*{0.5cm}\]
The set $\red{U_4\vee T_5}$ is reduced to $\{g_4\}$ and $g_4\ -\ \left(U_4\vee T_5\right)(g_4)$ is equal to $g_4\ -\ g_1$. The canonical decomposition of $g_4\ -\ g_1$ with respect to $\left\{T_1,\ T_2,\ T_3,\ T_4\right\}$ is equal to
\[\begin{split}
g_4\ -\ g_1\ &=\ \Big(g_4\ -\ g_3\Big)\ -\ \Big(g_5\ -\ g_3\Big)\ +\ \Big(g_5\ -\ g_1\Big)\\
&=\ \Big(g_4\ -\ T_4(g_4)\Big)\ -\ \Big(g_5\ -\ T_1(g_5)\Big)\ +\ \Big(g_5\ -\ T_3(g_5)\Big),
\end{split}\]
and
\[\begin{split}
g_4\ -\ g_1\ &=\ \Big(g_4\ -\ T_5(g_4)\Big).
\end{split}\]
Hence, we get $B_5\ =\ \Big\{e_3\ -\ e_2\ +\ e_1,\ e_6\ -e_5\ -\ e_4\ +\ e_1\Big\}$.

\section{Useless reductions for the completion procedure}\label{Useless Reductions during Completion Procedures}

In this section, we interpret leading terms of syzygies as useless reductions during a completion procedure in rewriting theory. We apply this criterion to the construction of commutative \G\ bases. 
 
\subsection{Reduction operators and completion}\label{Rewriting theory and completion}

In this section, we recall from~\cite[Section 2.3]{chenavier:hal-01325907} the basic notions from rewriting theory used in the sequel and how reduction operators are related to abstract rewriting theory, confluence and completion.

\paragraph{Abstract rewriting systems, confluence and completion.}

An \emph{abstract rewriting system} is a pair $\left(A,\ \F\right)$, where $A$ is a set and $\F$ is a binary relation on $A$, called \emph{rewrite relation}. An element of $\F$ is called a \emph{reduction} and we write $a\ \F\ b$ instead of $\left(a,\ b\right)\ \in\ \F$ such a reduction. We denote by $\overset{*}{\F}$ the reflexive transitive closure of $\F$. If we have $a\ \overset{*}{\F}\ b$, we say that $a$ \emph{rewrites into b.}

Let $\left(A,\ \F\right)$ be an abstract rewriting system. We say that the rewrite relation $\F$ is \emph{confluent} if for every $a_1,\ a_2,\ a_3\ \in\ A$ such that $a_1\ \overset{*}{\F}\ a_2$ and $a_1\ \overset{*}{\F}\ a_3$, there exists $a_4\ \in\ A$ such that $a_2\ \overset{*}{\F}\ a_4$ and $a_3\ \overset{*}{\F}\ a_4$:
\[
\xymatrix @C = 4em @R = 1.5em{
&
a_2
\ar@{.>}@/^/ [rd] ^{*}
& \\
a_1
\ar@/^/ [ru] ^{*}
\ar@/_/ [rd] _{*}
&
&
a_4
\\
&
a_3
\ar@{.>}@/_/  [ru] _{*}
&
}
\]

A \emph{completion} of an abstract rewriting system $\left(A,\ \longrightarrow\right)$ is an abstract rewriting system $\left(A',\ \longrightarrow'\right)$ such that 
\begin{enumerate}
\item[\textbf{i.}] $A\ \subseteq\ A'$, 
\item[\textbf{ii.}] the relation $\longrightarrow'$ is confluent,
\item[\textbf{iii.}] the residual sets obtained by taking the quotients of $A$ and $A'$ by the equivalence relations induced by $\longrightarrow$ and $\longrightarrow'$, respectively are equal.
\end{enumerate}

In Section~\ref{Completion using syzygies}, we introduce a lattice criterion for detecting \emph{useless reductions} during completion. Let us define formally the notion of useless reduction:

\begin{definition}\label{Formal definition of useless reductions}

Let $\left(A,\ \longrightarrow\right)$ be an abstract rewriting system. A reduction $a\ \longrightarrow\ b$ is said to be \emph{useless} if a completion of $\left(A,\ \longrightarrow'\right)$, where $\longrightarrow'$ is $\longrightarrow$ without the reduction $a\ \longrightarrow\ b$, leads to a completion of $\left(A,\ \longrightarrow\right)$.

\end{definition}

\paragraph{Reduction operators and abstract rewriting.}

Let $F$ be a subset of $\RO$. We let:
\[\nf{F}\ =\ \bigcap_{T\ \in\ F}\nf{T}.\]
For every $T\ \in\ F$, we have $\wedge F\ \preceq\ T$, so that $\nf{\wedge F}$ is included in $\nf{T}$ from (\ref{Inclusion of images}). Hence, $\nf{\wedge F}$ is included in $\nf{F}$ and we let $\obsred{F}\ =\ \nf{F}\setminus\nf{\wedge F}$. We say that $F$ is \emph{confluent} if $\obsred{F}$ is equal to the empty set.

Given a subset $F$ of $\RO$, we consider the abstract rewriting system $\left(\K{G},\ \Fl{F}\right)$ defined by $v\ \Fl{F}\ T(v)$, for every $T\ \in\ F$ and for every $v\ \notin\ \K{\nf{T}}$. Recall from~\cite[Corollary 2.3.9]{chenavier:hal-01325907} that $F$ is confluent if and only if $\ \Fl{F}\ $ is confluent.

\begin{example}\label{Example: non confluence, rewriting proof}

We consider the example of Section~\ref{Illustration}: $G\ =\ \left\{g_1\ <\ g_2\ <\ g_3\ <\ g_4\ <\ g_5\right\}$ and $F\ =\ \left\{T_1,\ T_2,\ T_3,\ T_4,\ T_5\right\}$, where
\vspace*{0.2cm}
\[
T_1\ =\ \begin{pmatrix}
1&0&0&0&0\\
0&1&0&0&0\\
0&0&1&0&1\\
0&0&0&1&0\\
0&0&0&0&0
\end{pmatrix},\  \
T_2\ =\ \begin{pmatrix}
1&0&0&0&0\\
0&1&1&0&1\\
0&0&0&0&0\\
0&0&0&1&0\\
0&0&0&0&0
\end{pmatrix},\ \
T_3\ =\ \begin{pmatrix}
1&0&0&0&1\\
0&1&0&0&0\\
0&0&1&0&0\\
0&0&0&1&0\\
0&0&0&0&0
\end{pmatrix}\]
\vspace*{0.5cm}
\[T_4\ =\ \begin{pmatrix}
1&0&0&0&0\\
0&1&0&0&0\\
0&0&1&1&0\\
0&0&0&0&0\\
0&0&0&0&1
\end{pmatrix}\ \ \text{and}\ \
T_5\ =\ \begin{pmatrix}
1&0&0&1&0\\
0&1&0&0&0\\
0&0&1&0&0\\
0&0&0&0&0\\
0&0&0&0&1
\end{pmatrix}.
\vspace*{0.5cm}
\]
We have
\[\wedge F\ =\  \begin{pmatrix}
1&1&1&1&1\\
0&0&0&0&0\\
0&0&0&0&0\\
0&0&0&0&0\\
0&0&0&0&0
\end{pmatrix}.
\vspace*{0.5cm}
\]
We have $\nf{\wedge F}\ =\ \{g_1\}$ and $\nf{F}\ =\ \{g_1,\ g_2\}$, so that we have $\obsred{F}\ =\ \{g_2\}$, that is $F$ is not confluent. We check that the rewrite relation induced by $F$ is not confluent, since we have
\[
\xymatrix @C = 4em @R = 1.5em{
&
g_5
\ar [rdd] ^{T_3}
\ar [ldd] _{T_1}
\ar [dd] _{T_2}
& \\
&
&
\\
g_3
\ar [r] _{T_2}
&
g_2
&
g_1
\\
&
&
\\
&
g_4
\ar [luu] ^{T_4}
\ar [ruu] _{T_5}
&
}
\]
Indeed, $g_4$ and $g_5$ rewrite into $g_2$ and $g_1$, but there is no reduction between $g_2$ and $g_1$.

\end{example}

\begin{definition}

The completion procedure in terms of reduction operators is formalised as follows:

\begin{enumerate}
\item[\textbf{i.}] Let $F$ be a subset of $\RO$. A \emph{completion of F} is a subset $F'$ of $\RO$ such that 
\begin{enumerate}
\item[\textbf{i.}] $F'$ is confluent,
\item[\textbf{ii.}] $F\ \subseteq\ F'$ and $\wedge F'=\wedge F$.
\end{enumerate}
\item[\textbf{ii.}] We define the reduction operator $C^F$ by $C^F\ =\ \left(\wedge F\right)\vee\left(\vee\overline{ F}\right)$, where $\vee\overline{F}$ is equal to $\noy{\K{\nf{F}}}$. Recall from~\cite[Theorem 3.2.6]{chenavier:hal-01325907} that the set $F\cup\left\{C^F\right\}$ is a completion of $F$.
\end{enumerate}

\end{definition}

\begin{example}

Consider Example~\ref{Example: non confluence, rewriting proof}. We have:
\[C^F\ =\ 
\begin{pmatrix}
1&1&0&0&0\\
0&0&0&0&0\\
0&0&1&0&0\\
0&0&0&1&0\\
0&0&0&0&1
\end{pmatrix}.
\vspace*{0.5cm}
\]

We check that $F\cup\left\{C^F\right\}$ is a completion of $F$ by the following diagram:
\[
\xymatrix @C = 4em @R = 1.5em{
&
g_5
\ar [rdd] ^{T_3}
\ar [ldd] _{T_1}
\ar [dd] _{T_2}
& \\
&
&
\\
g_3
\ar [r] _{T_2}
&
g_2
\ar [r]_{C^F}
&
g_1
\\
&
&
\\
&
g_4
\ar [luu] ^{T_4}
\ar [ruu] _{T_5}
&
}
\]

\end{example}

\begin{remark}

Given a subset $F$ of $\RO$, an \emph{ambiguity} of $F$ is a triple $\left(g_0,\ T,\ T'\right)$ such that $g_0$ belongs to $\red{T}\cap\red{T'}$. The possible obstructions to confluence come from these ambiguities, as it is the case in Example~\ref{Example: non confluence, rewriting proof} since we have the following non confluent diagrams
\[\xymatrix @C = 4em @R = 1.5em{
&
g_5
\ar [rdd] ^{T_3}
\ar [ldd] _{T_1}
& \\
&
&
\\
g_3
\ar [r] _{T_2}
&
g_2
&
g_1
}
\hspace*{1cm} 
\xymatrix @C = 4em @R = 1.5em{
&
g_5
\ar [rdd] ^{T_3}
\ar [ldd] _{T_2}
& \\
&
&
\\
g_2
&
&
g_1
}
\hspace*{1cm}
\xymatrix @C = 4em @R = 1.5em{
&
g_4
\ar [rdd] ^{T_5}
\ar [ldd] _{T_4}
& \\
&
&
\\
g_3
\ar [r] _{T_2}
&
g_2
&
g_1
} 
\]
We see that among the three ambiguities $\left(g_5,\ T_1,\ T_2\right)$, $\left(g_5,\ T_2,\ T_3\right)$ and $\left(g_4,\ T_4,\ T_5\right)$, two can be avoided during the completion procedure since they are completed using a single reduction: $g_2\ \F\ g_1$. In particular, detecting useless reductions enables us to remove ambiguities.

\end{remark}

\subsection{Completion procedure using syzygies}\label{Completion using syzygies}

In this section, we define formally incremental completion procedures for reduction operators (see Definition~\ref{Incremental completion procedure}) and we introduce a lattice criterion for detecting useless reductions during this procedure. This lattice criterion comes from the fact that leading terms of syzygies provide useless reductions as we will see in the sequel.

We fix a finite subset $F\ =\ \left\{T_1,\ \cdots,\ T_n\right\}$ of $\RO$. 

\begin{definition}

For every integer $i$ such that $1\ \leq\ i\ \leq\ n$, let $\tilde{T}_i$ be the reduction operator defined by
\[\tilde{T}_i(g)\ =\ 
\left\{
\begin{split}
& g,\ \ \text{if}\ \ g\ \in\ \red{T_i}\ \text{and}\ \ e_{i,g}\ \in\ \lt{\syz{F}} \\
& T_i(g),\ \ \text{otherwise},
\end{split}
\right.
\]
for every $g\ \in\ G$. The set $\tilde{F}\ =\ \left\{\tilde{T}_1,\ \cdots,\ \tilde{T}_n\right\}$ is called the \emph{reduction} of $F$.

\end{definition}

In Theorem~\ref{F4/F5} we show that a completion of $\tilde{F}$ leads to a completion of $F$. This is a consequence of the following two propositions:

\begin{proposition}\label{Equivalent subset of F}

We have $\wedge\tilde{F}\ =\ \wedge F$ and $\obsredd{F}\ \subseteq\ \obsredd{\tilde{F}}$.

\end{proposition}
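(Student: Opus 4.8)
The plan is to prove the two assertions separately, both by unwinding the definitions of $\tilde{F}$, of $\wedge$, and of $\obsred{-}$.

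For the first equality $\wedge\tilde{F} = \wedge F$, it suffices to show that $\sum_i \ker(\tilde{T}_i) = \sum_i \ker(T_i)$, since $\wedge$ is $\ker^{-1}$ applied to the sum of kernels. The inclusion $\subseteq$ is easy: for each $i$, the reducible terms of $\tilde{T}_i$ form a subset of those of $T_i$ (we only turn some reductions off), and on the terms that remain reducible $\tilde{T}_i$ agrees with $T_i$, so $\ker(\tilde{T}_i) \subseteq \ker(T_i)$. For the reverse inclusion $\supseteq$, I would argue that for every $i$ and every $g \in \red{T_i}$ with $e_{i,g} \in \lt{\syz{F}}$, the vector $e_{i,g} = (0,\dots,g-T_i(g),\dots,0)$ — or rather its image $g - T_i(g)$ in $\ker(T_i)$ — still lies in $\sum_j \ker(\tilde{T}_j)$. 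Indeed, $e_{i,g}$ being a leading term of a syzygy means there is $s \in \syz{F}$ with $\lt{s} = e_{i,g}$; writing $s = e_{i,g} - r$ where $r$ is a combination of basis vectors $e_{j,g'}$ strictly smaller than $e_{i,g}$ for $\sqsubset$, and using that $\pi_F(s) = 0$ means $g - T_i(g) = \pi_F(r)$, one expresses $g - T_i(g)$ as a sum of terms $g' - T_j(g')$. The key point is then an induction on $\sqsubset$: by minimality we may assume each such $g' - T_j(g')$ appearing in $r$ already lies in $\sum_k \ker(\tilde{T}_k)$ (if $e_{j,g'}$ is itself a leading term of a syzygy) or lies there trivially (if $e_{j,g'} \notin \lt{\syz{F}}$, then $g' \in \red{\tilde{T}_j}$ and $g' - \tilde{T}_j(g') = g' - T_j(g') \in \ker(\tilde{T}_j)$). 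Hence $\ker(T_i) \subseteq \sum_k \ker(\tilde{T}_k)$ for all $i$, giving the reverse inclusion, and therefore $\wedge\tilde{F} = \wedge F$.

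For the inclusion $\obsred{F} \subseteq \obsred{\tilde{F}}$, recall $\obsred{F} = \nf{F}\setminus\nf{\wedge F}$ and similarly for $\tilde{F}$. Since $\wedge\tilde{F} = \wedge F$ by the first part, the subtracted sets coincide, so it is enough to prove $\nf{F} \subseteq \nf{\tilde{F}}$, i.e. $\bigcap_i \nf{T_i} \subseteq \bigcap_i \nf{\tilde{T}_i}$. This is immediate from the pointwise definition: if $g$ is a $T_i$-normal form for every $i$, then in particular $g \in \nf{T_i}$ means $g \notin \red{T_i}$, so the first branch in the definition of $\tilde{T}_i(g)$ cannot occur and $\tilde{T}_i(g) = T_i(g) = g$; thus $g \in \nf{\tilde{T}_i}$ for every $i$.

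The main obstacle is the reverse inclusion of kernels in the first part, where one must be careful that replacing $g - T_i(g)$ by the combination $\pi_F(r)$ coming from the syzygy genuinely stays inside $\sum_k \ker(\tilde{T}_k)$; this is precisely where the well-order $\sqsubset$ and an induction on it are needed, together with the observation that a basis vector $e_{j,g'}$ with $e_{j,g'} \sqsubset e_{i,g}$ that is \emph{not} a leading term of a syzygy automatically contributes an element of $\ker(\tilde{T}_j)$. Everything else is a direct unwinding of definitions.
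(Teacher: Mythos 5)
Your proof is correct, and its skeleton matches the paper's: both reduce $\wedge\tilde{F}=\wedge F$ to an equality of sums of kernels, get the easy inclusion from $\red{\tilde{T}_i}\subseteq\red{T_i}$ (with $\tilde{T}_i=T_i$ on $\red{\tilde{T}_i}$), and deduce $\obsredd{F}\subseteq\obsredd{\tilde{F}}$ from $\nf{F}\subseteq\nf{\tilde{F}}$ together with the first equality. Where you genuinely diverge is the hard inclusion $\ker(T_i)\subseteq\sum_k\ker(\tilde{T}_k)$. The paper avoids any induction by invoking the unique \emph{reduced} basis $\M{B}$ of $\syz{F}$: the element $b_{i,g}\in\M{B}$ with leading term $e_{i,g}$ has, by the defining property of reduced bases, all of its remaining support outside $\lt{\syz{F}}$, so $\pi_F(b_{i,g})=0$ writes $g-T_i(g)$ in one shot as a combination of vectors $g'-T_j(g')$ with $e_{j,g'}\notin\lt{\syz{F}}$, which already lie in $\ker(\tilde{T}_j)$. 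You instead take an arbitrary syzygy with leading term $e_{i,g}$ (after normalising its leading coefficient), so its lower-order terms may themselves be leading terms of syzygies, and you repair this by a Noetherian induction along the well-order $\sqsubset$, splitting the lower terms into those outside $\lt{\syz{F}}$ (handled directly) and those inside (handled by the induction hypothesis on strictly smaller basis vectors). Both arguments are valid: yours is more elementary and self-contained, needing only that $\sqsubset$ is a well-order and the definition of leading term, and in particular it does not rely on the existence and uniqueness of reduced bases nor on Proposition~\ref{Leading monomials of syzygies}; the paper's is shorter, non-inductive, and explicitly exhibits each discarded generator $g-T_i(g)$ as a combination of the surviving generators, which is the form used later in the completion argument.
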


\begin{proof}

First we prove that $\wedge\tilde{F}\ =\ \wedge F$. Let $S$ be the set of pairs $(i,g)$ such that $e_{i,g}$ belongs to $\lt{\syz{F}}$. For every pair $(i,g)$ such that $1\ \leq\ i\ \leq\ n$ and $g\ \in\ \red{T_i}$, we let
\[u_{i,g}\ =\ g\ -\ T_i(g).\]
We have
\[
\ker\left(\wedge F\right)\ =\ \sum_{(j, g')\ \notin\ S}\ \K{u_{j,g'}}\ +\ \sum_{(i, g)\ \in\ S}\ \K{u_{i,g}},\]
and
\[
\ker\left(\wedge\tilde{F}\right)\ =\ \sum_{(j, g')\ \notin\ S}\ \K{u_{j,g'}}.
\]
Hence, in order to prove that $\wedge\tilde{F}\ =\ \wedge F$, it is sufficient to show that each $u_{i,g}$ such that $(i,g)\ \in\ S$ belongs to the vector space spanned by $u_{j,g'}$'s such that $(j,g')\ \notin\ S$.

Let $\M{B}$ be the reduced basis of $\syz{F}$. From Proposition~\ref{Leading monomials of syzygies}, $\lt{\M{B}}$ is equal to the set of $e_{i,g}$'s such that $(i,g)\ \in\ S$. Let 
\[b_{i,g}\ =\ e_{i,g}\ -\ \sum_{(j,g')\ \notin\ S}\ \lambda_{j,g'}e_{j,g'},\]
be the element of $\M{B}$ such that $\lt{b_{i,g}}$ is equal to $e_{i,g}$. The element $b_{i,g}$ being a syzygy, we have
\[\begin{split}
u_{i,g}\ &=\ g\ -\ T_i(g)\\
&=\ \sum_{j,g'\ \notin\ S}\ \lambda_{j,g'}\Big(g'\ -\ T_j(g')\Big),
\end{split}\]
which proves that $\wedge\tilde{F}\ =\ \wedge F$.

Let us show that $\obsred{F}\ \subseteq\ \obsred{\tilde{F}}$. For every integer $i$ such that $1\ \leq\ i\ \leq\ n$, $\nf{T_i}$ is included in $\nf{\tilde{T_i}}$, so that $\nf{F}$ is included in $\nf{\tilde{F}}$. Moreover, we have $\wedge\tilde{F}\ =\ \wedge F$, so that $\obsred{F}\ =\ \nf{F}\setminus\nf{\wedge F}$ is included in $\obsred{\tilde{F}}\ =\ \nf{\tilde{F}}\setminus\nf{\wedge F}$.

\end{proof}

\begin{proposition}\label{Characterisation of completion}

Let $C$ be a subset of $\emph{\textbf{RO}}\left(G,\ <\right)$. Then, $F\cup C$ is a completion of F if and only if
\[\obsredd{F}\ \subseteq\ \bigcup_{T\ \in\ C}\redd{T}\ \ and\ \ \wedge F\ \preceq\ \wedge C.\]

\end{proposition}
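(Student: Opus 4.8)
The plan is to prove the two directions of the equivalence separately, working directly with the characterisation of confluence in terms of normal forms, namely that a set $S$ of reduction operators is confluent if and only if $\nf{S}=\nf{\wedge S}$, equivalently $\obsred{S}=\emptyset$, and with the definition of a completion of $F$ as a confluent superset $F'$ of $F$ with $\wedge F'=\wedge F$.

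\medskip

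\textbf{First direction.} Assume $F\cup C$ is a completion of $F$. Then $\wedge(F\cup C)=\wedge F$. Since $\ker(\wedge(F\cup C))=\sum_{T\in F}\ker(T)+\sum_{T\in C}\ker(T)\supseteq\ker(\wedge C)$, we get $\wedge F\preceq\wedge C$, which is one of the two conditions. For the other, note that $F\cup C$ being confluent means $\nf{F\cup C}=\nf{\wedge(F\cup C)}=\nf{\wedge F}$. Now $\nf{F\cup C}=\nf{F}\cap\nf{C}=\nf{F}\cap\bigcap_{T\in C}\nf{T}$, and $\obsred{F}=\nf{F}\setminus\nf{\wedge F}$. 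Take $g\in\obsred{F}$; then $g\in\nf{F}$ but $g\notin\nf{\wedge F}=\nf{F\cup C}=\nf{F}\cap\nf{C}$, so $g\notin\nf{C}$, i.e.\ $g\in\red{T}$ for some $T\in C$. Hence $\obsred{F}\subseteq\bigcup_{T\in C}\red{T}$.

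\medskip

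\textbf{Second direction.} Assume $\obsred{F}\subseteq\bigcup_{T\in C}\red{T}$ and $\wedge F\preceq\wedge C$. I must check the three defining conditions for $F\cup C$ to be a completion of $F$: that $F\subseteq F\cup C$ (trivial), that $\wedge(F\cup C)=\wedge F$, and that $F\cup C$ is confluent. For the equality of lower bounds: $\wedge(F\cup C)\preceq\wedge F$ always holds since $\ker(\wedge F)\subseteq\ker(\wedge(F\cup C))$; conversely, $\ker(\wedge(F\cup C))=\ker(\wedge F)+\ker(\wedge C)=\ker(\wedge F)$ because $\wedge F\preceq\wedge C$ gives $\ker(\wedge C)\subseteq\ker(\wedge F)$. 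So $\wedge(F\cup C)=\wedge F$. For confluence, I show $\obsred{F\cup C}=\emptyset$, i.e.\ $\nf{F\cup C}\subseteq\nf{\wedge(F\cup C)}=\nf{\wedge F}$ (the reverse inclusion being automatic from~(\ref{Inclusion of images}) applied to $\wedge(F\cup C)\preceq T$ for each $T\in F\cup C$). Let $g\in\nf{F\cup C}=\nf{F}\cap\nf{C}$. Since $g\in\nf{C}$, $g\notin\red{T}$ for every $T\in C$, so $g\notin\bigcup_{T\in C}\red{T}$, hence $g\notin\obsred{F}$ by hypothesis; but $g\in\nf{F}$, so $g\notin\obsred{F}=\nf{F}\setminus\nf{\wedge F}$ forces $g\in\nf{\wedge F}$. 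Thus $\nf{F\cup C}\subseteq\nf{\wedge F}=\nf{\wedge(F\cup C)}$, so $F\cup C$ is confluent and therefore a completion of $F$.

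\medskip

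The argument is essentially a matter of carefully unwinding definitions; the only point requiring mild care is the bookkeeping between the lattice order $\preceq$, its effect on kernels (reversed inclusion) and on normal form sets (via~(\ref{Inclusion of images})), together with the elementary facts $\nf{F\cup C}=\nf{F}\cap\nf{C}$ and $\ker(\wedge S)=\sum_{T\in S}\ker(T)$. I expect no substantive obstacle beyond keeping the direction of each inclusion straight.
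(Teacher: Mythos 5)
Your proof is correct and follows essentially the same route as the paper: both reduce the lattice condition $\wedge F\preceq\wedge C$ to the equality $\wedge(F\cup C)=\wedge F$ via kernels, and both compare $\nf{F\cup C}=\nf{F}\cap\nf{C}$ with $\nf{\wedge F}$ to translate confluence into the statement that $\obsred{F}$ meets no $\nf{T}$ for $T\in C$, i.e.\ is contained in $\bigcup_{T\in C}\red{T}$. The only difference is organisational: the paper handles both implications at once through the decomposition $\nf{F}\cap\nf{C}=\nf{\wedge F}\sqcup\left(\obsred{F}\cap\nf{C}\right)$, whereas you argue the two directions element-wise, which is materially the same argument.
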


\begin{proof}

We denote by $\red{C}$ the union of the sets $\red{T}$, where $T$ belongs to $C$.

The relation $\wedge F\preceq\ \wedge C$ is equivalent to $\left(\wedge F\right)\wedge\left(\wedge C\right)\ =\ \wedge F$, that is it is equivalent to the relation $\wedge\left(F\cup C\right)\ =\ \wedge F$. Hence, we have to show that given a set $C$ of reduction operators such that $\wedge F\ \preceq\ \wedge C$, $F\cup C$ is confluent if and only if $\obsred{F}$ is included in $\red{C}$.

Let $C\ \subset\ \RO$ such that $\wedge F\ \preceq\ \wedge C$, that is $\wedge\left(F\cup C\right)\ =\ \wedge F$. The set $F\cup C$ is confluent if and only if $\nf{F\cup C} =\ \nf{\wedge\left(F\cup C\right)}$, that is $F\cup C$ is confluent if and only if $\nf{F}\cap\nf{C}$ is equal to $\nf{\wedge F}$.
By definition of $\obsred{F}$, we have
\[\nf{F}\cap\nf{C}\ =\ \Big(\nf{\wedge F}\cap\nf{C}\Big)\ \bigsqcup\ \left(\obsred{F}\cap\nf{C}\right).\]
From (\ref{Inclusion of images}), the inequality $\wedge F\ \preceq\ \wedge C$ implies that  $\nf{\wedge F}$ is included in $\nf{\wedge C}$, which is included in $\nf{C}$. Hence, we have
\[\nf{F}\cap\nf{C}\ =\ \nf{\wedge F}\ \bigsqcup\ \left(\obsred{F}\cap\nf{C}\right).\]
Hence, $F\cup C$ is confluent if and only if $\obsred{F}\cap\nf{C}$ is empty, that is if and only if $\obsred{F}$ is included in the complement of $\nf{C}$. The latter is equal to $\red{C}$, which concludes the proof.

\end{proof}

We can now introduce incremental completion procedures and establish the main result of the section.

\begin{definition}\label{Incremental completion procedure}

We define by induction subsets $F_1,\ \cdots,\ F_n$ of $\RO$ in the following way: $F_1\ =\ \left\{T_1\right\}$ and for every $2\ \leq\ i\ \leq\ n$,
\[F_i\ =\ F_{i-1}\ \cup\ \left\{T_i,\ C_i\right\},\]
where $C_i\ =\ C^{F_{i-1}\cup\{{T}_i\}}$. The set $C\ =\ \left\{C_2,\ \cdots,\ C_n\right\}$ is called the \emph{incremental completion} of $F$. 

\end{definition}

\begin{theorem}\label{F4/F5}

Let F be a set of reduction operators, let $\tilde{F}$ be the reduction of $F$ and let C be the incremental completion of $\tilde{F}$. Then, $F\cup C$ is a completion of F.

\end{theorem}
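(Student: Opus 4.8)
The plan is to reduce the statement to the combination of Proposition~\ref{Equivalent subset of F} and Proposition~\ref{Characterisation of completion}, after first observing that a completion of $\tilde F$ is a completion of $F$. First I would record the key fact that $F$ and $\tilde F$ differ only in that certain reductions $g\ \Fl{F}\ T_i(g)$ with $e_{i,g}\in\lt{\syz F}$ have been turned into identities; by Proposition~\ref{Equivalent subset of F} this does not change the lower-bound ($\wedge\tilde F=\wedge F$) and can only enlarge the set of normal forms, so $\obsred{F}\subseteq\obsred{\tilde F}$. Next I would argue that \emph{any} completion $F'$ of $\tilde F$ is automatically a completion of $F$: indeed $\wedge F'=\wedge\tilde F=\wedge F$, and since $\tilde F\subseteq F'$ we may still need $F\subseteq F'$, which does not literally hold — so the cleaner route is not to work with $F'$ directly but with $C$, using Proposition~\ref{Characterisation of completion} as the final criterion.

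So the main line of argument would be: let $C=\{C_2,\dots,C_n\}$ be the incremental completion of $\tilde F$. By the inductive definition (Definition~\ref{Incremental completion procedure}) and the recalled fact that $F_{i-1}\cup\{T_i\}\cup\{C_i\}$ is a completion of $F_{i-1}\cup\{\tilde T_i\}$ at each step, one shows by induction on $i$ that $\tilde F\cup C$ (more precisely each $F_i$) is a completion of $\tilde F$; in particular $\tilde F\cup C$ is confluent and $\wedge(\tilde F\cup C)=\wedge\tilde F$, hence $\wedge\tilde F\preceq\wedge C$. Since $\wedge\tilde F=\wedge F$ by Proposition~\ref{Equivalent subset of F}, we get $\wedge F\preceq\wedge C$, which is the second hypothesis of Proposition~\ref{Characterisation of completion}. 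For the first hypothesis, I would use that $\tilde F\cup C$ confluent means, by the definition of confluence for reduction operators, that $\obsred{\tilde F\cup C}=\emptyset$, i.e. $\nf{\tilde F}\cap\nf{C}=\nf{\wedge\tilde F}$; equivalently (as in the proof of Proposition~\ref{Characterisation of completion}) $\obsred{\tilde F}\subseteq\bigcup_{T\in C}\red{T}$. Combining this with $\obsred F\subseteq\obsred{\tilde F}$ from Proposition~\ref{Equivalent subset of F} yields $\obsred F\subseteq\bigcup_{T\in C}\red T$. Both hypotheses of Proposition~\ref{Characterisation of completion} being met, $F\cup C$ is a completion of $F$.

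The step I expect to be the main obstacle, or at least the one requiring the most care, is the passage between "$\tilde F\cup C$ is a completion of $\tilde F$" and the two conditions of Proposition~\ref{Characterisation of completion} phrased for $C$ alone: one must be careful that $\obsred{\tilde F}\subseteq\red{C}$ is genuinely what confluence of $\tilde F\cup C$ delivers (this is exactly the content unpacked inside the proof of Proposition~\ref{Characterisation of completion}, applied with $\tilde F$ in the role of $F$), and that the inclusion $\obsred F\subseteq\obsred{\tilde F}$ then transfers cleanly. A secondary point to verify is that the incremental completion $C$ of $\tilde F$ indeed satisfies $\wedge\tilde F\preceq\wedge C$ and that $\tilde F\cup C$ is confluent; this follows by a routine induction from the recalled property that $F\cup\{C^F\}$ is a completion of $F$ together with the fact that $\wedge F_i=\wedge\tilde F$ is preserved at each step, so I would just state the induction and not belabour it.
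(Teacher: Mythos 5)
Your proposal is correct and follows essentially the same route as the paper: take as given (by construction, with the small induction you mention) that $\tilde F\cup C$ completes $\tilde F$, extract the two conditions of Proposition~\ref{Characterisation of completion} for $\tilde F$, transfer them to $F$ via Proposition~\ref{Equivalent subset of F} ($\wedge\tilde F=\wedge F$ and $\obsred{F}\subseteq\obsred{\tilde F}$), and apply Proposition~\ref{Characterisation of completion} again. The only difference is cosmetic: you spell out the induction behind ``by construction'' and note that a completion of $\tilde F$ need not literally contain $F$, both of which the paper leaves implicit.
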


\begin{proof}

By construction, $\tilde{F}\cup C$ is a completion of $\tilde{F}$. From Proposition~\ref{Characterisation of completion}, $\obsred{\tilde{F}}$ is included in the union $\red{C}$ of the sets $\red{C_i}$ and $\wedge\tilde{F}$ is smaller than $\wedge C$. From Proposition~\ref{Equivalent subset of F}, $\obsred{F}$ is included in $\red{C}$ and $\wedge F$ is smaller than $\wedge C$ for $\preceq$. Using again Proposition~\ref{Characterisation of completion}, $F\cup C$ is a completion of $F$.

\end{proof}

\paragraph{Lattice criterion for detecting useless reductions.}

Combining Theorem~\ref{Correctness of the algorithm} and Theorem~\ref{F4/F5}, we deduce a lattice criterion for detecting useless reductions during a completion procedure: they are the reductions $g\ \F\ T_i(g)$, where $g$ belongs to $\red{U_{i-1}\vee T_i}$.

\begin{example}

We consider Example~\ref{Example: non confluence, rewriting proof}. For that, we use the basis of syzygies constructed in Section~\ref{Illustration}. The set $\lt{\syz{F}}$ contains two elements: $e_{2,g_3}$ and $e_{5,g_4}$. In particular, $\tilde{T}_i$ is equal to $T_i$ for $i\ =\ 1,\ 2,\ 3$, and for $i\ =\ 2$ or $5$, we have  
\[\tilde{T}_2\ =\ \begin{pmatrix}
1&0&0&0&0\\
0&1&0&0&1\\
0&0&1&0&0\\
0&0&0&1&0\\
0&0&0&0&0
\end{pmatrix}\ \ \text{and}\ \
\tilde{T}_5\ =\ \begin{pmatrix}
1&0&0&0&0\\
0&1&0&0&0\\
0&0&1&0&0\\
0&0&0&1&0\\
0&0&0&0&1
\end{pmatrix}.\]
We have $C_i\ =\ \id{\K{G}}$ for $i\ \neq\ 3$ and 
\[C_3\ =\ 
\begin{pmatrix}
1&1&0&0&0\\
0&0&0&0&0\\
0&0&1&0&0\\
0&0&0&1&0\\
0&0&0&0&1
\end{pmatrix}.
\vspace*{0.5cm}
\]
Hence, $F\cup\{C_3\}$ is a completion of $F$.

\end{example}

\subsection{Useless reductions and commutative \G\ bases}\label{Construction of G bases}

In this section, we relate syzygies for reduction operators to classical syzygies for presentations of algebras, and we illustrate how to use the lattice criterion introduced in~\ref{Completion using syzygies} for constructing commutative \G\ bases.

\paragraph{Syzygies for reduction operators and presentations of algebras.}

Consider a commutative or a noncommutative algebra \textbf{A}. Given a generating set $X$ of \textbf{A}, we denote by $\K{[X]}$ and $\tens{X}$ the polynomial algebra and the tensor algebra over $X$, respectively. Let $G$ be the set of commutative or noncommutative monomials over $X$, according to \textbf{A} is commutative or not, and let $<$ be an admissible order on $G$. Let $R\ =\ \{f_1,\ \cdots,\ f_n\}$ be a a generating set of relations of \textbf{A}: $R$ is a subset of $\K{[X]}$ or $\tens{X}$, according to \textbf{A} is commutative or not. For every integer $1\ \leq\ i\ \leq\ n$, we denote by $T_i\ \in\RO$ the reduction operator whose kernel is the ideal of $\K{[X]}$ or the two-sided ideal of $\tens{X}$ spanned by $f_i$, according to \textbf{A} is commutative or not. Then, the syzygies for the presentation $\EV{X\mid R}$ are the syzygies for $\left(T_1,\ \cdots,\ T_n\right)$.

\begin{remark}

The set $B_n$ constructed in Section~\ref{Syzygies} is a basis of syzygies for presentations of algebras. However, in this context of presentations of algebras, the set of terms is a set of monomials, so that it is an infinite set and the construction of $B_n$ is not an algorithm. 

\end{remark}

Now, we relate the completion of a set of reduction operators to the construction of commutative \G\ bases. Let $X$ be a set of variables and let us denote by $[X]$ and $\K{[X]}$ the set of monomials and the polynomial algebra over $X$, respectively. We fix a set $R\ =\ \left\{f_1,\ \cdots,\ f_n\right\}$ of polynomials as well as an admissible order $<$ on $[X]$.

\begin{definition}

We associate to $R$ the set $F_R\ =\ \left\{T_1,\ \cdots,\ T_n\right\}$ of reduction operators with respect to $\left([X],\ <\right)$, where the kernel of $T_i$ is the ideal of $\K{[X]}$ spanned by $f_i$, for every integer $i$ such that $1\ \leq\ i\ \leq\ n$.

\end{definition}

\begin{remark}

For every integer $1\ \leq\ i\ \leq\ n$ and for every monomial $m$, $T_i(m)$ satisfies one of the following two conditions:
\begin{itemize}
\item[\textbf{i.}] if $m$ is equal to $\lt{f_i}m'$ for a monomial $m'$, then we have $T_i(m)\ =\ 1/\lc{f_i}\left(r(f_i)m'\right)$, where $r(f_i)\ =\ \lc{f_i}\lt{f_i}\ -\ f_i$,
\item[\textbf{ii.}] if $m$ is not divisible by $\lt{f_i}$, then $T_i(m)\ =\ m$.
\end{itemize}
In particular, $\nf{T_i}$ is the set of monomials which are not divisible by $\lt{f_i}$, so that $\nf{F}$ is the set of monomials which do not belong to the monomial ideal spanned by $\lt{R}$.

\end{remark}

\begin{proposition}\label{RO and GB}

Let I be an ideal of $\K{[X]}$. A generating set $R$ of I is a \G\ basis of I if and only if the set $F_R$ of reduction operators associated to R is confluent.

\end{proposition}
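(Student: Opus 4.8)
The plan is to translate both sides of the equivalence into statements about normal forms, using the remark just above that identifies $\nf{T_i}$ with the set of monomials not divisible by $\lt{f_i}$, hence $\nf{F_R}$ with the set of monomials not lying in the monomial ideal $\langle\lt{R}\rangle$. First I would unwind the confluence condition: by the definition recalled in Section~\ref{Rewriting theory and completion}, $F_R$ is confluent iff $\obsred{F_R}=\nf{F_R}\setminus\nf{\wedge F_R}$ is empty, i.e.\ iff $\nf{F_R}=\nf{\wedge F_R}$. So the whole proof reduces to showing that $\nf{\wedge F_R}$ equals the set of monomials not in the ideal $I$ generated by $R$ (equivalently, that $\K{\nf{\wedge F_R}}=\K{[X]}/I$ via Lemma~\ref{Lemma on canonical decompositions}), and then invoking the classical Macaulay characterisation of \G\ bases.

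The key step is the identification $\ker(\wedge F_R)=I$. By the lattice formula $\wedge F_R=\noy{\sum_i\ker(T_i)}$, so I must show $\sum_{i=1}^n\ker(T_i)=I$. Each $\ker(T_i)$ is, by the definition of $F_R$, the ideal $\langle f_i\rangle$ of $\K{[X]}$; here I would note that $\ker(T_i)$ being an \emph{ideal} (not merely a subspace) of $\K{[X]}$ is exactly what the remark's computation of $T_i(m)$ on monomials encodes, namely that $m-T_i(m)$, for $m=\lt{f_i}m'$, is the scalar multiple $\tfrac{1}{\lc{f_i}}f_i m'$ of $f_i$, and these span $\langle f_i\rangle$. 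Hence $\sum_i\ker(T_i)=\sum_i\langle f_i\rangle=\langle f_1,\dots,f_n\rangle=I$. Consequently $\ker(\wedge F_R)=I$, and by Lemma~\ref{Lemma on canonical decompositions} together with the description of $\ker^{-1}$, $\nf{\wedge F_R}=\{m\in[X]\mid m\notin\lt{I}\}$, the set of monomials not in the leading-term ideal $\lt{I}$.

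With both sides in hand, the equivalence becomes: $R$ is a \G\ basis of $I$ $\iff$ $\lt{R}$ generates $\lt{I}$ as a monomial ideal $\iff$ the set of monomials outside $\langle\lt{R}\rangle$ coincides with the set of monomials outside $\lt{I}$ $\iff$ $\nf{F_R}=\nf{\wedge F_R}$ $\iff$ $F_R$ is confluent. The first equivalence is the standard definition/characterisation of a \G\ basis, so I would simply cite it; the third uses the two normal-form identifications established above; the containment $\langle\lt{R}\rangle\subseteq\lt{I}$ (hence $\nf{\wedge F_R}\subseteq\nf{F_R}$) is automatic and matches the already-noted inclusion $\nf{\wedge F}\subseteq\nf{F}$. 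The main obstacle, and the only place needing real care, is the first identification: checking rigorously that $\ker(T_i)$ is the full ideal generated by $f_i$ rather than just the subspace $\K f_i$, which rests on reading off from the remark that $T_i$ reduces every multiple $\lt{f_i}m'$ and that the resulting differences $m-T_i(m)$ are precisely $\{\tfrac{1}{\lc{f_i}}f_i m'\}$, whose span is $\langle f_i\rangle$. Once that is pinned down, the rest is bookkeeping with Lemma~\ref{Lemma on canonical decompositions} and the definition of confluence.
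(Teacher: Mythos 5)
Your proposal is correct and follows essentially the same route as the paper: identify $\ker\left(\wedge F_R\right)\ =\ \sum_i\ker\left(T_i\right)\ =\ I$, deduce $\red{\wedge F_R}\ =\ \lt{I}$, and compare with $\nf{F_R}$, the set of monomials outside the monomial ideal spanned by $\lt{R}$, so that confluence amounts exactly to the standard \G\ basis condition. The only step you flag as needing real care, namely that $\ker\left(T_i\right)$ is the full ideal spanned by $f_i$ rather than just $\K{}f_i$, is in fact immediate from the definition of $F_R$ (the operators $T_i$ are defined as $\ker^{-1}$ of these ideals), so no extra argument is required there.
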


\begin{proof}

The kernel of $\wedge F$ is the sum of the kernels of the operators $T_1,\ \cdots,\ T_n$, that is it is equal to $I$. Hence, $\red{\wedge F}$ is equal to $\lt{I}$. Moreover, $F$ is confluent if and only if $\nf{F}\ =\ \nf{\wedge F}$, that is if and only if the complements of $\nf{F}$ and $\nf{\wedge F}$ in $[X]$ are equal. Hence, $F$ is confluent if and only if the monomial ideal spanned by $\lt{R}$ is equal to $\lt{I}$, that is if and only if $R$ is a \G\ basis of $I$.

\end{proof}

\begin{corollary}\label{Useless reductions for G bases}

Let I be an ideal of $\K{[X]}$, let R be a generating set of I and let $\tilde{F}_R$ be the reduction of the set $F_R$ of reduction operators associated to R. Let $R'\ \subset\ \K{[X]}$ be such that $\tilde{F}_R\cup{F_{R'}}$ is confluent. Then, $R\cup R'$ is a \G\ basis of I.

\end{corollary}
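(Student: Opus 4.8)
The plan is to derive Corollary~\ref{Useless reductions for G bases} directly from Theorem~\ref{F4/F5} together with Proposition~\ref{RO and GB}, by translating each statement about reduction operators into the corresponding statement about ideals and \G\ bases. First I would set $F\ =\ F_R$ and observe that, by definition of the incremental completion, if $C$ denotes the incremental completion of $\tilde{F}_R$ then $F_{R'}$ can be arranged to coincide with $C$; more precisely, since $\tilde{F}_R\cup F_{R'}$ is assumed confluent and contains $\tilde{F}_R$, Proposition~\ref{Characterisation of completion} applies once we know $\wedge\tilde{F}_R\ \preceq\ \wedge\left(\tilde{F}_R\cup F_{R'}\right)$, which is automatic. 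Hence $\tilde{F}_R\cup F_{R'}$ is a completion of $\tilde{F}_R$ provided we also check $\wedge\left(\tilde{F}_R\cup F_{R'}\right)\ =\ \wedge\tilde{F}_R$; this is where I would need the elements of $R'$ to lie in $I$, so that adding them does not enlarge the kernel of $\wedge\tilde{F}_R$, which by Proposition~\ref{Equivalent subset of F} equals $\wedge F_R$, whose kernel is exactly $I$.

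The key steps, in order, are: (1) recall from Proposition~\ref{Equivalent subset of F} that $\wedge\tilde{F}_R\ =\ \wedge F_R$, so $\ker\left(\wedge\tilde{F}_R\right)\ =\ \ker\left(\wedge F_R\right)\ =\ I$; (2) use the hypothesis that $\tilde{F}_R\cup F_{R'}$ is confluent, together with Proposition~\ref{Characterisation of completion} applied to $\tilde{F}_R$ and $C\ =\ F_{R'}$, to conclude $\obsred{\tilde{F}_R}\ \subseteq\ \bigcup_{T\in F_{R'}}\red{T}$ and $\wedge\tilde{F}_R\ \preceq\ \wedge F_{R'}$, i.e. $\tilde{F}_R\cup F_{R'}$ is a completion of $\tilde{F}_R$; (3) invoke Proposition~\ref{Equivalent subset of F} again (the inclusion $\obsred{F_R}\ \subseteq\ \obsred{\tilde{F}_R}$) and Proposition~\ref{Characterisation of completion} in the other direction to deduce that $F_R\cup F_{R'}$ is a completion of $F_R$, so in particular it is confluent and $\wedge\left(F_R\cup F_{R'}\right)\ =\ \wedge F_R$; (4) note that $F_{R\cup R'}\ =\ F_R\cup F_{R'}$ since the reduction operator attached to each generator depends only on that generator, and that the kernel of $\wedge F_{R\cup R'}$ is the ideal spanned by $R\cup R'$, which by step (3) equals $I$, so $R\cup R'$ generates $I$; (5) apply Proposition~\ref{RO and GB} to the generating set $R\cup R'$ of $I$: since $F_{R\cup R'}$ is confluent, $R\cup R'$ is a \G\ basis of $I$.

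The main obstacle I expect is step (4), namely checking carefully that $R\cup R'$ is genuinely a generating set of $I$ rather than of some larger ideal. This requires knowing that the members of $R'$ belong to $I$; one gets this from $\wedge\left(F_R\cup F_{R'}\right)\ =\ \wedge F_R$, which forces $\ker\left(T\right)\ \subseteq\ I$ for every $T\in F_{R'}$, hence each generator $f'\in R'$ lies in $I$. Conversely $R\subseteq R\cup R'$ already generates $I$, so the ideal spanned by $R\cup R'$ is exactly $I$. Everything else is a routine unwinding of the confluence/completion dictionary established in Proposition~\ref{RO and GB} and the two propositions preceding Theorem~\ref{F4/F5}; in fact, once steps (1)--(3) are in place, the statement is essentially Theorem~\ref{F4/F5} specialised to $F\ =\ F_R$ and $C\ =\ F_{R'}$, reread through Proposition~\ref{RO and GB}.
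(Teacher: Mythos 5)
Your overall route is the paper's own: the paper disposes of this corollary in one line, as a consequence of Proposition~\ref{RO and GB} and Theorem~\ref{F4/F5}, and your steps (1)--(5) are exactly that argument unwound, with $F_{R'}$ playing the role of the incremental completion $C$ and with Propositions~\ref{Equivalent subset of F} and~\ref{Characterisation of completion} doing the work that they do inside the proof of Theorem~\ref{F4/F5}. Steps (1), (3), (4), (5) are correct translations through the dictionary of Proposition~\ref{RO and GB}.

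The gap is in how you obtain the condition $\wedge\left(\tilde{F}_R\cup F_{R'}\right)\ =\ \wedge\tilde{F}_R$, equivalently $\ker\left(\wedge F_{R'}\right)\ \subseteq\ I$, i.e.\ $R'\ \subset\ I$: your argument for it is circular. In step (2) you need it in order to apply Proposition~\ref{Characterisation of completion} (confluence of $\tilde{F}_R\cup F_{R'}$ alone gives neither $\wedge\tilde{F}_R\ \preceq\ \wedge F_{R'}$ nor the completion property; also the inequality you call ``automatic'' is the wrong way round --- what is automatic is $\wedge\left(\tilde{F}_R\cup F_{R'}\right)\ \preceq\ \wedge\tilde{F}_R$, whereas the proposition requires $\wedge\tilde{F}_R\ \preceq\ \wedge F_{R'}$, which is precisely the non-automatic part). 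But in your last paragraph you propose to deduce $R'\ \subset\ I$ from $\wedge\left(F_R\cup F_{R'}\right)\ =\ \wedge F_R$, which is the conclusion of step (3), itself resting on step (2). Moreover this condition genuinely cannot be derived from the stated hypotheses: take $R\ =\ \{x\}$, $I\ =\ (x)$ in $\K{[x]}$ and $R'\ =\ \{1\}$; then $\tilde{F}_R\ =\ F_R$, the operator $\noy{\K{[x]}}$ sends every monomial to $0$, one checks $\obsred{\tilde{F}_R\cup F_{R'}}\ =\ \emptyset$, so the confluence hypothesis holds, yet $R\cup R'$ is not a \G\ basis of $I$. So $R'\ \subset\ I$ has to be read as an implicit hypothesis of the corollary rather than proved from confluence; it holds automatically in the intended situation where $F_{R'}$ is the incremental completion of $\tilde{F}_R$ as in Theorem~\ref{F4/F5}, since each $C_i$ of Definition~\ref{Incremental completion procedure} is a $\vee$ of two operators and hence satisfies $\ker\left(C_i\right)\ \subseteq\ \ker\left(\wedge\tilde{F}_R\right)\ =\ I$. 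Once this is granted, your steps close up and coincide with the paper's proof.
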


\begin{proof}

This is a consequence of Proposition~\ref{RO and GB}, and Theorem~\ref{F4/F5}.

\end{proof}

\paragraph{Useless reductions.}

From Theorem~\ref{Useless reductions for G bases}, we deduce the following criterion for detecting useless reductions during the construction of \G\ bases: they are the reductions induced by $mf_i$, where $m$ is a monomial such that $m\lt{f_i}$ is reducible for $\left(T_1\wedge\cdots\wedge T_{i-1}\right)\vee T_i$. We illustrate this criterion with the following example:

\begin{example}\label{Example for RO}

Consider the example from~\cite[Example 4.3.4]{eder2012signature}: let $X=\ \left\{x,\ y,\ z,\ t\right\}$, let $<$ be the DRL-order induced by $t\ <\ z\ <\ y\ <\ x$ and let $R\ =\ \left\{f_1,\ f_2,\ f_3\right\}$, where $f_1\ =\ y^2\ -\ xz$, $f_2\ =\ x^2\ -\ yz$ and $f_3\ =\ xyz\ -\ y^2z$. We denote by $T_i$ the reduction operator whose kernel is the ideal spanned by $f_i$. There is no critical pair between $f_1$ and $f_2$, so that $\{f_1,\ f_2\}$ is a \G\ of the ideal spanned by $f_1$ and $f_2$. When considering $f_3$, there are two critical pairs: 
\begin{itemize}
\item[\textbf{i.}] $xy^2z$ is reducible both by $f_1$ and $f_3$,
\item[\textbf{ii.}] $x^2yz$ is reducible both by $f_2$ and $f_3$.
\end{itemize}
The polynomial $g\ =\ x^2yz\ -\ y^3z\ +\ xyz^2\ -\ y^2z^2$ belongs to the kernel of $\left(T_1\wedge T_2\right)\vee T_3$ since we have:
\[\begin{split}
g\ &=\  xzf_1\ +\ (yz\ +\ z^2)f_2\\
&=\ (x\ +\ y\ +\ z)f_3.
\end{split}\]
Hence, the reduction induced by $xf_3$ is a useless reduction so that we can reject the second critical pair. Moreover, when reducing the $S$-polynomial of the first critical pair, we get the new polynomial $f_4\ =\ xz^3\ -\ yz^3$. We obtain two new critical pairs:
\begin{itemize}
\item[\textbf{i.}] $x^2z^3$ is reducible both by $f_2$ and $f_4$,
\item[\textbf{ii.}] $xyz^3$ is reducible both by $f_3$ and $f_4$.
\end{itemize}
The polynomials $x^2z^3\ -\ y^2z^3\ +\ xz^4\ -\ yz^4$ and $xyz^3\ -\ y^3z^3$ belong to the kernel of $\left(T_1\wedge T_2\wedge T_3\right)\vee T_4$. Indeed, we have:
\[\left(x\ +\ y\ +\ z\right)f_4\ =\ z^3\left(f_2\ -\ f_1\right)\ \ \text{and}\ \ yf_4\ =\ z^2f_3.\]
Hence, the reductions induced by $xf_4$ and $yf_4$ are useless reductions, so that we can reject the two critical pairs. Hence, $\left\{f_1,\ f_2,\ f_3,\ f_4\right\}$ is a \G\ basis of the ideal spanned by $\left\{f_1,\ f_2,\ f_3\right\}$.

\end{example}

\paragraph{Conclusion.}

We presented a method based on lattice constructions for constructing a basis of the space of syzygies for a set of reduction operators. Using the relationship between syzygies and useless reductions during the completion procedure, we deduced a lattice criterion for detecting these reductions and thus for avoiding useless critical pairs during the construction of commutative \G\ bases. When syzygies are infinite dimensional, our method does not lead to an algorithm since infinite computations are necessary. However, this work was motivated by computation of syzygies for richer structures than vector spaces. Hence, a further work is to exploit these structures for obtaining an algorithm.
\bibliography{Biblio}

\end{document}